\documentclass[review]{elsarticle}

\usepackage{amssymb,amsmath,comment}
\usepackage{amsthm}
\usepackage[usenames,dvipsnames]{xcolor}
\usepackage{subfigure}
\usepackage{caption}
\usepackage{algorithm, algorithmicx, algcompatible}
\usepackage{algpseudocode}
\usepackage{graphicx}
\usepackage{tikz}
\usepackage[export]{adjustbox}
\usepackage{epstopdf}
\usepackage{hyperref}
\usepackage{multirow}
\usepackage{diagbox}
\usepackage{enumitem}

\definecolor{cadmiumgreen}{rgb}{0.0, 0.42, 0.24}

\usepackage{lineno}
\modulolinenumbers[5]
\newdefinition{rmk}{Remark }
\newdefinition{prop}{Proposition}
\newtheorem{lemma}{Lemma}
\newtheorem{theorem}{Theorem}

\makeatletter
\def\@author#1{\g@addto@macro\elsauthors{\normalsize%
    \def\baselinestretch{1}%
    \upshape\authorsep#1\unskip\textsuperscript{%
      \ifx\@fnmark\@empty\else\unskip\sep\@fnmark\let\sep=,\fi
      \ifx\@corref\@empty\else\unskip\sep\@corref\let\sep=,\fi
      }%
    \def\authorsep{\unskip,\space}%
    \global\let\@fnmark\@empty
    \global\let\@corref\@empty  
    \global\let\sep\@empty}%
    \@eadauthor={#1}
}
\makeatother

\begin{document}
\begin{frontmatter}

\title{Accelerating GMRES with Matrix-Free Multiscale Robin Preconditioners}

\author[]{Dilong Zhou$^1$}
\cortext[cor1]{Corresponding author}
\ead{zdtc68@gmail.com}
\author[]{Rafael T. Guiraldello$^2$}
\ead{rafaeltrevisanuto@gmail.com}
\author[]{Felipe Pereira$^{3,*}$ \corref{cor3}}
\ead{luisfelipe.pereira@utdallas.edu}
\author[]{Fabr\'{\i}cio S. Sousa$^1$}
\ead{f.s.sousa@usp.br}

\address{$^1$ Institute of Mathematics and Computer Science,\\
University of S\~ao Paulo, \\
Av. Trab. S\~ao Carlense, 400 - Centro, S\~ao Carlos, 13566-590, SP, Brazil \\
$^2$ Piri Technologies, LLC \\
 1000 E. University Ave., Dept. 4311, Laramie, WY 82071-2000, USA\\
$^3$ Department of Mathematical Sciences, The University of Texas at Dallas, \\
800 W. Campbell Road, Richardson, Texas 75080-3021, USA \\
}

\begin{keyword}
Multiscale Methods, Mixed Finite Elements, Oversampling, Porous media, Smoothing, Robin boundary condition, Preconditioner, GMRES.
\end{keyword}

\begin{abstract}

We propose a matrix-free right-preconditioning strategy for the Generalized Minimal Residual (GMRES) method based on the Multiscale Robin Coupled Method with oversampling (MRCM-OS) for the numerical solution of elliptic problems arising in subsurface flow.
The resulting preconditioner is constructed through local subdomain solves with oversampling and smoothing, and can be applied without explicit assembly of the global operator.

After a careful presentation of the new procedure, it is used in extensive numerical experiments. 
Our results demonstrate that the proposed approach substantially reduces iteration counts across a range of challenging, high-contrast subsurface flow problems. In many cases, convergence is obtained in one or two GMRES iterations when oversampling and smoothing are employed.

The results indicate that combining GMRES with multiscale Robin-based operators is a promising direction for the construction of rapidly convergent preconditioning strategies.



\end{abstract}

\end{frontmatter}

\section{Introduction}\label{intro-pre}
Simulating two- and three-phase flow problems relevant to the oil industry often involves handling high-resolution permeability data, extremely large computational grids with billions of cells, and extended time steps. Within each time step, both a convection-diffusion equation and a second-order elliptic equation must be solved. The presence of high-contrast permeability fields adds further complexity to these problems. Accurately capturing fluid flow within complex rock formations is vital for geoscience and reservoir modeling, yet remains challenging despite considerable research efforts.

Over the last two decades, researchers have developed a range of approaches that combine domain decomposition strategies with multiscale techniques, making significant advancements in the field. These methods can be grouped into three primary types: Multiscale Finite Volume, Multiscale Finite Element, and Multiscale Mixed Finite Element approaches. A comprehensive review of different strategies for developing multiscale methods can be found in \cite{MRCM-OS, zhou2025}. This paper focuses on the Multiscale Robin Coupled Method (MRCM) \cite{guiraldello2018multiscale, pereira} and its related procedures \cite{hani_pereira_1, 2020MUMM}.

The Multiscale Robin Coupled Method (MRCM) applies Robin-type boundary conditions to all local problems defined on a non-overlapping domain partition. Based on a domain decomposition approach \cite{Douglas1993}, it enforces weak continuity of pressure and the normal flux component across adjacent subdomain interfaces. $\alpha$ regulates the balance between pressure continuity and normal flux continuity in the Robin boundary conditions, enabling MRCM to simulate both MMMFEM and MHM \cite{guiraldello2018multiscale}. By prioritizing pressure continuity (adjust $\alpha$ to 0), MRCM approximates MMMFEM. Conversely, by emphasizing flux continuity (adjust $\alpha$ to $\infty$), MRCM converges toward MHM. 

MRCM’s impressive scalability on parallel multicore supercomputers has driven considerable research aimed at refining the method and extending its application to new problems. Relevant examples can be found in \cite{interface2021, MMtwo2020, MRCM2022, MPM1, MPM2}.

Along the interfaces of non-overlapping subdomains, the resonance errors in MRCM-OS are reduced more effectively than in MRCM through the incorporation of oversampling and smoothing procedures. Oversampling enables each Multiscale Basis Function (MBF) to be constructed in extended regions while preserving the compatibility conditions across the non-overlapping partitions. Additionally, smoothing procedures implemented as a Schwarz-type method derived from overlapping Schwarz domain decomposition techniques \cite{DDintro, smith2004domain} address small-scale errors arising in the multiscale solution and significantly improve accuracy. Together, these strategies allow MRCM-OS to achieve a two-order-of-magnitude improvement in accuracy compared to the original MRCM method.

The integration of multiscale coarse spaces has substantially enhanced the robustness and efficiency of traditional preconditioners over the last three decades. This progress has spurred the development of numerous innovative approaches. Among these multiscale preconditioners, the majority \cite{2007robust, 1996nonstandard, 2014algebraic, 2016anadaptive, 2017BDDC, 2012analysis, 2018BDDC, 2015FETI-DP, 2007adaptive, 2010domain1, 2010domain2, 2011acourse, 2016acomparison, 2019adaptive, 2022multilevel, 2015upscaling, 2025ahighly, 2025conferenceschwarz, Jiang20251962, LI2024117056, BOUTILIER2024561, CALVO2024112909,
dryja2010feti, dryja2015analysis,  dryja2016deluxe, yu2023non} have been tailored to elliptic problems formulated in a second-order setting. This paper focuses on such problems.

Various types of preconditioners have been developed, including additive Schwarz preconditioners, which also encompass multilevel additive methods \cite{2007robust, 1996nonstandard, 2012analysis, 2010domain1, 2010domain2, 2011acourse, 2019adaptive, 2022multilevel, 2025conferenceschwarz, CALVO2024112909, 2020amultilevel, 2010spectral, Lu20252187, Heinlein2025A1170, GUILLET2025114136, Martin20242986}. There are also BDDC (Balancing Domain Decomposition by Constraints) and FETI-DP (Finite Element Tearing and Interconnecting - Dual-Primal) preconditioners \cite{2016anadaptive, 2017BDDC, 2018BDDC, 2015FETI-DP, 2007adaptive, 2016acomparison, dryja2010feti, galvis2009feti, galvis2010bdd}, as well as two-grid and two-level multiscale preconditioners \cite{2025ahighly, LI2024117056, BOUTILIER2024561, 2024efficient, 2024arobust, 2025two, 2025high}. Additionally, there are preconditioners with Conjugate Gradient methods (PCG) \cite{Jiang20251962, 2013nonsymmetric} and those employing the Generalized Minimum Residual Method (GMRES) \cite{2014algebraic, 2024optimizing, Martin20242041}. This paper specifically focuses on preconditioners built around GMRES and its modification.

Several studies utilize multiscale techniques to improve accuracy, drawing on methods such as the Multiscale Finite Element Method \cite{2007robust, 2014algebraic, 2010domain1, 2010domain2, 2015upscaling, 2024optimizing}, the Multiscale Finite Volume Method \cite{2014algebraic}, the Multiscale Mixed Finite Element Method \cite{2019domain}, and the Generalized Multiscale Finite Element Method \cite{2024arobust, 2019atwo-grid, 2024anadaptive}. Given the high accuracy demonstrated by MRCM-OS, this paper combines GMRES with MRCM-OS to construct a highly efficient preconditioner.

The main contribution of this work is not a new multiscale discretization, but the use of an oversampled MRCM operator as a matrix-free right preconditioner within GMRES. While multiscale methods based on the Robin boundary condition for subdomain coupling have been previously developed, their use as linear operators defining effective preconditioners for Krylov methods has not been systematically analyzed.
The key idea is to reinterpret the multiscale procedure as an approximate inverse operator acting on residuals, enabling the construction of a fast solver without forming the global matrix.

This work is divided into several sections. The first section introduces the formulation of MRCM, incorporating oversampling and smoothing. The following section describes the preconditioner construction using GMRES and its integration with MRCM-OS to achieve a highly efficient preconditioner. Subsequently, our numerical experiments are presented, with separate subsections focusing on different experimental aspects.
First, we consider two examples with analytical solutions: one with a constant permeability field \cite{MRCM-OS, zhou2025} and one with a variable permeability field. The second set of experiments includes three permeability fields from the SPE10 dataset \cite{SPE10}: one containing a challenging high-permeability channel, one with the highest contrast ratio, and one representing a typical field with moderate contrast and no channel. Finally, we examine the most challenging cases by considering large-contrast permeability fields containing high- and/or low permeability inclusions from \cite{interface2021}, demonstrating how our method performs under highly challenging permeability configurations.
These numerical results indicate very rapid convergence, across the tested cases, of the preconditioning strategy based on the MRCM-OS method. In Appendix A, we formulate a condition that would provide a theoretical explanation for this observed behavior. Verifying whether this condition is satisfied by the proposed method lies beyond the scope of the present work and is deferred to future study.


\section{The MRCM with Oversampling and Smoothing (MRCM-OS)}

In our analysis, we focus on single-phase flow within porous media. The major equations, which describe the pressure $p$ and the Darcy velocity ${\bf u}$, are expressed as follows:

\begin{eqnarray}
{\bf u} & = & -\, K\,\nabla p \qquad \mbox{in}~\Omega \label{eq1a-pre}\\
\nabla \cdot {\bf u} & = & f \qquad \mbox{in}~\Omega \label{eq1b-pre}\\
p & = & g \qquad \mbox{on}~\partial \Omega_p \\
{\bf u}\cdot {\bf n} & = & z \qquad \mbox{on}~\partial \Omega_u \label{eq1d-pre}
\end{eqnarray}

Here, $\Omega\subset \mathbb{R}^d$, where in practical applications we only consider two- or three-dimensional problems, so we set $d=2$ or $3$.  The absolute permeability field $K$,  whose components belong to $L^\infty(\Omega)$, is a symmetric, uniformly positive definite tensor. The source term $f$ belongs to $L^2(\Omega)$, while the prescribed pressure and normal velocity boundary conditions are given by $g\,\in\,H^{\frac12}(\partial\Omega_p)$ and $z\,\in\,H^{-\frac12}(\partial\Omega_u)$, respectively. The outer unit normal vector to $\partial{\Omega}$ is set as ${\bf n}$.

\subsection{The MRCM with Oversampling  (MRCM-O)}

The approach utilized in this study is founded on the MRCM-O \cite{MRCM-OS, zhou2025, i-MRCM-OS} which is briefly summarized in this section. Let $\mathcal{T}_h$  denote a Cartesian mesh that subdivides $\Omega\subset \mathbb{R}^d$ into $d$-dimensional rectangular elements. From this discretization, $\Omega$ is subdivided into a set of non-overlapping subdomains $\{\Omega_i\}_{i=1,\ldots,m}$,  and let $\Gamma$ be the union of all interfaces $\Gamma_{i,j}=\overline{\Omega}_i\cap\overline{\Omega}_j, \forall i,j=1\dots\,m$, and define $\Gamma_i = \partial\Omega_i\setminus\partial\Omega$. For each subdomain $\Omega_i$, define $\hat{\Omega}_i$ as the augmented subdomain comprising $\Omega_i$ and an adjoining region. These definitions are illustrated in Figure \ref{overlapjpg-pre}. The variational formulation of the MRCM-O, expressed in discrete form, is: 
For $i=1,\ldots,m$, compute $({\bf u}_h^i,p_h^i,\lambda_h^i)\,\in\,{\bf V}_{hz}^i\times Q_h^i\times \Lambda^i_{H}$  as follows:

\begin{eqnarray}
  (K^{-1}{\bf u}_h^i,{\bf v})_{\Omega_i}-(p_h^i,\nabla\cdot {\bf v})_{\Omega_i} 
  +(\beta_i\,{\bf u}_h^i\cdot\check{\bf n}^i,{\bf v}\cdot\check{\bf n}^i)_{\Gamma_i}
  +(\lambda_h^i,{\bf v}\cdot\check{\bf n}^i)_{\Gamma_i} \nonumber \\ = -(g,{\bf v}\cdot\check{\bf n}^i)_{\partial\Omega_i\cap\partial\Omega_p},  \label{eq11d-pre}\\
  (q,\nabla\cdot {\bf u}_h^i)_{\Omega_i}  =   (f,q)_{\Omega_i}, \label{eq12d-pre}\\
  \sum_{i=1}^m ({\bf u}_h^i\cdot \check{\bf n}^i,M)_{\Gamma_i} = 0,  \label{eq13d-pre}\\
  \sum_{i=1}^m (\beta_i\,{\bf u}_h^i\cdot \check{\bf n}^i+\lambda_h^i,V \,\check{\bf n}^i\cdot\check{\bf n})_{\Gamma_i} = 0,  \label{eq14d-pre}
\end{eqnarray}
hold for all $({\bf v},q)\,\in\,{\bf V}_{h0}^i$ and for all $(M,V)\,\in\, M_H \times V_H \subset F_h(\mathcal{E}_h)\times\,F_h(\mathcal{E}_h)$, where
\begin{eqnarray*}
  {\bf V}_{hy}^i&=&\{{\bf v}\,\in\,{\bf V}_{h}^i~,~
  {\bf v}\cdot \check{\bf n}=y~\mbox{on}\,\partial\Omega_i\cap\partial\Omega_u
  \}~, \label{eq:Vhy-pre}
\end{eqnarray*}
and ${\bf V}_{h}^i\times Q_h^i$ are the lowest-order Raviart-Thomas \cite{RaviartThomas::1977}
spaces for velocity and pressure defined for $\Omega_i$, and
\begin{equation*}
  F_h(S_h) = \{ f:{S}_h\to \mathbb{R}~|~f|_e\,\in\,\mathbb{P}_0~,
  ~\forall\,e\,\in\,{S}_h \}
\end{equation*}
where $\mathcal{E}_h$ denotes the set of all faces (or edges) of $\mathcal{T}_h$ that lie on $\Gamma$, and ${S}_h$ is any subset of edges (or faces) of $\mathcal{T}_h$ (see \cite{MRCM-OS, zhou2025} for further details).

Following \cite{guiraldello2018multiscale}, we note that when imposing Robin boundary conditions 
\begin{equation}
 -\beta_i \, {\bf u}^i \cdot \check{\bf n}^i + p^i = g_R \label{eq:genRobin-pre}
\end{equation}
where  $g_R$  is a specified value, we set
\begin{equation}
 \beta_i\left({\bf x}\right) = \dfrac{\alpha H}{K_{H}\left({\bf x}\right)}
 \end{equation}
where the harmonic average of the neighboring $K$ values is denoted by $K_{H}$. The parameter $\alpha$, which governs the balance between the normal component of the flux and the pressure in the Robin boundary condition, is dimensionless. Section \ref{intro-pre} provides a detailed discussion on the importance of $\alpha$.

The Lagrange multiplier spaces $\Lambda^i_H = \mbox{span}\left\{\phi_i^1,\phi_i^2,..,\phi_i^N\right\} \subset F_h(\mathcal{E}_h\cap\Gamma_i)$ with $\phi_i^k$ given by
\begin{equation*}
  \phi_i^k = -\beta_i\,{\bf u}_h^{k}\cdot {\bf \check{n}}^i|_{\Gamma_i} + \pi^k|_{\Gamma_i}, \ \forall\,k=1,\dots,N,  
  \label{informed_space-pre}  
\end{equation*}
where ${\bf u}_h^{k}\cdot {\bf \check{n}}^i$ and $\pi^k$ are, respectively, the normal component of the velocity field
and pressure, retrieved on the interface $\Gamma_i$ as solutions of the discrete
problem given by

\begin{equation}
  \begin{array}{rclll}
    {\bf u}_h^{k} &=& -K\nabla_h\,p_h^{k} &&\mbox{in} \ \hat\Omega_i \\
    \nabla_h\cdot{\bf u}_h^{k} &=& 0 &&\mbox{in}  \ \hat\Omega_i \\
    p_h^{k} & = & 0 \qquad &&\mbox{on}~\partial\hat\Omega_i\cap\partial\Omega_p \\
    {\bf u}_h^{k}\cdot {\bf n}^i & = & 0 \qquad &&\mbox{on}~\partial\hat\Omega_i\cap\partial\Omega_u \\
    -\beta_i\,{\bf u}_h^{k}\cdot {\bf n}^i + p_h^{k} &=& \lambda^k &&\mbox{on}~\partial\hat\Omega_i\setminus\partial\Omega
  \end{array}. \label{eq:oversampling_problem-pre}
\end{equation}

To efficiently solve the system (\ref{eq11d-pre})–(\ref{eq14d-pre}) and capitalize on the computational advantages offered by multi-core architectures, we employ multiscale basis functions (\emph{MBFs}) (refer to \cite{MRCM-OS, zhou2025, guiraldello2018multiscale, i-MRCM-OS, yotov2009}). After obtaining the solutions to equations (\ref{eq11d-pre})–(\ref{eq14d-pre}), a smoothing procedure is applied for a predetermined number of iterations, as outlined in \cite{MRCM-OS, zhou2025}. 
This comprehensive strategy is known as \emph{MRCM-OS}, and it is implemented through the following algorithm: The initial step involves defining the decomposition of the domain and specifying the oversampling size. Next, the Lagrange multiplier spaces $\Lambda^i_H$ are constructed by solving \eqref{eq:oversampling_problem-pre}. In the third step, the system \eqref{eq11d-pre}--\eqref{eq14d-pre} is solved using MBFs. Finally, a smoothing procedure is applied iteratively for a prescribed number of steps.

The detailed structure of MRCM-OS is presented in \cite{MRCM-OS, zhou2025}, and further details on the weak formulation and its analysis can be found in \cite{MRCM-OS, zhou2025, i-MRCM-OS}.

\begin{figure}[H]
    \centering
    \includegraphics[width = 0.4\textwidth]{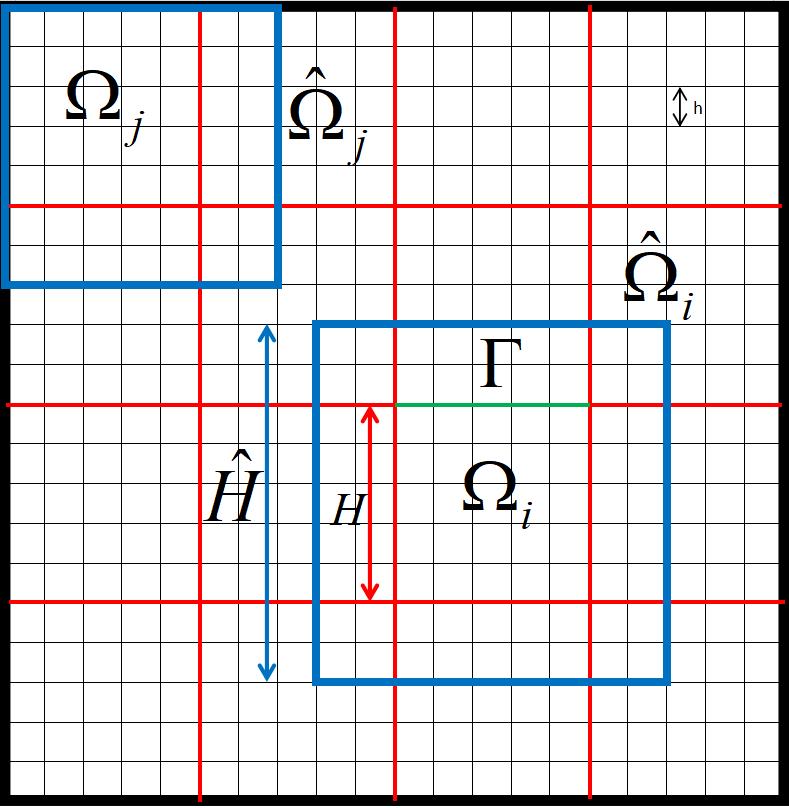}
    \caption{$\Omega_i$ denotes the non-overlapping subdomains, together form the entire domain, with corresponding oversampling regions $\hat{\Omega}_i$. The MRCM-O framework is defined using three numerical scales. These are the fine mesh size $h$, the diameter $H$ of the non-overlapping subdomains, and the larger diameter $\hat H$  of the oversampling regions, which satisfy the inequality $\hat H > H \geq h$.}
    \label{overlapjpg-pre}
\end{figure}

\section{New preconditioner with GMRES}

To derive the linear system from problem (\ref{eq1a-pre})-(\ref{eq1d-pre}), we apply a finite element discretization based on the lowest-order Raviart-Thomas spaces ${\bf V}_{h} \times Q_h$ defined over $\Omega$. This formulation after static condensation results in the following algebraic system:  
\begin{equation}  
  A p = b,  
\end{equation}  
where $A$ is the discrete system matrix arising from the discretized differential operators, $p$ denotes the discrete pressure unknowns at mesh points, and $b$ is the right-hand side vector incorporating the contributions from source terms and boundary conditions.
The matrix $A$ is typically large and sparse, requiring the use of efficient iterative solvers and preconditioning strategies to accelerate convergence \cite{FGMRES, ALI2024112609}. The permeability tensor $K$, which can exhibit significant contrast and heterogeneity, particularly in the SPE10 benchmark with channel structures considered in this study, introduces large variations that lead to ill-conditioning in the discretized system. These variations pose challenges for iterative solvers, underscoring the need for robust preconditioning techniques to improve the convergence properties of the method \cite{FGMRES, STATHOPOULOS1995197, 10.2514/6.1994-520, 10.1002/nme.3032}.

Although the discrete system matrix A is symmetric positive definite (SPD), the proposed MRCM-OS preconditioning strategy introduces a non-symmetric operator T. In this framework, the preconditioned system no longer satisfies the symmetry requirements of the Preconditioned Conjugate Gradient (PCG) method. Consequently, we employ the Generalized Minimal Residual method (GMRES), which is robust to non-symmetric operators and ensures stable convergence  (Algorithm \ref{fgmres-pre}).

After discretization, the action of the preconditioner can be interpreted as a linear operator $(T: \mathbb{R}^n \to \mathbb{R}^n)$, where $n$ is the number of pressure unknowns. Given a residual vector $r \in \mathbb{R}^n$, the application $Tr$ is defined by solving the local subdomain problems with source terms induced by $r$, followed by the multiscale reconstruction procedure.
This defines an implicit, matrix-free linear operator that approximates the inverse of the global stiffness matrix.

\begin{algorithm}[h]
  \caption{Generalized Minimum Residual Method (GMRES)}
  \begin{algorithmic}[1]
    \State Compute $r_0 = b - A p_0$, $\beta = \|r_0\|$, and set $v_1 = r_0 / \beta$
    \For{$j = 1, \dots, m$}
    \State Apply $T$ to $v_j$: $z_j = T(v_j)$
    \State Compute $w = A z_j$
    \For{$i = 1, \dots, j$}
    \State $h_{i,j} = (w, v_i)$
    \State $w = w - h_{i,j} v_i$
    \EndFor
    \State Compute $h_{j+1,j} = \|w\|$ and $v_{j+1} = w / h_{j+1,j}$
    \EndFor
    \State Define $Z_m = [z_1, \dots, z_m]$ and $\bar{H}_m = \{h_{i,j}\}_{1 \leq i \leq j+1, 1 \leq j \leq m}$
    \State Compute $y_m = \arg\min_y \| \beta e_1 - \bar{H}_m y \|$
    \State Compute $p_m = p_0 + Z_m y_m$
    \If{stopping criterion is met}
    \State Stop
    \Else
    \State Set $p_0 \leftarrow p_m$ and go to step 1
    \EndIf
  \end{algorithmic}\label{fgmres-pre}
\end{algorithm}

 In order to employ MRCM-OS as preconditioners, a minor modification to the equations (\ref{eq11d-pre})-(\ref{eq14d-pre}) is required. The modified MRCM-O is given as follows: Find $({\bf u}_h^i,p_h^i,\lambda_h^i)\,\in\,{\bf V}_{h0}^i\times Q_h^i
\times \Lambda^i_{H}$, for
$i=1,\ldots,m$, such that
\begin{eqnarray*}
  (K^{-1}{\bf u}_h^i,{\bf v})_{\Omega_i}-(p_h^i,\nabla\cdot {\bf v})_{\Omega_i} 
  +(\beta_i\,{\bf u}_h^i\cdot\check{\bf n}^i,{\bf v}\cdot\check{\bf n}^i)_{\Gamma_i}
  +(\lambda_h^i,{\bf v}\cdot\check{\bf n}^i)_{\Gamma_i} = 0, \\
  (q,\nabla\cdot {\bf u}_h^i)_{\Omega_i}  =   (r,q)_{\Omega_i},\\
  \sum_{i=1}^m ({\bf u}_h^i\cdot \check{\bf n}^i,M)_{\Gamma_i} = 0, \\
  \sum_{i=1}^m (\beta_i\,{\bf u}_h^i\cdot \check{\bf n}^i+\lambda_h^i,V \,\check{\bf n}^i\cdot\check{\bf n})_{\Gamma_i} = 0,
\end{eqnarray*}
hold for all $({\bf v},q)\,\in\,{\bf V}_{h0}^i$ and for all $(M,V)\,\in\, M_H \times V_H \subset F_h(\mathcal{E}_h)\times\,F_h(\mathcal{E}_h)$. Notice that, the MRCM-OS is now defined with homogeneous boundary conditions and the source term $f=r$, where $r$ denotes the current residual.

The Preconditioning operator T is implemented as specialized matrix-free function within the MRCM-OS framework, rather than as an explicit matrix. By directly incorporating problem-specific information, this operator is specifically engineered to mitigate the severe ill-conditioning induced by high-contrast permeability $K$. This tight integration of the MRCM-OS strategy ensures efficient application within each GMRES iteration, significantly reducing the number of iterations required for convergence while maintaining solver robustness to effectively address complex heterogeneities, such as those present in the SPE10 benchmark.

\section{Numerical Studies}\label{Numerical part}

All numerical simulations were performed on the Santos Dumont high-performance computing (HPC) cluster at the LNCC in Brazil. We consider a sequence of test problems of increasing difficulty for linear solvers: (i) two problems with analytical solutions: one with constant permeability, slightly modified from \cite{MRCM-OS, zhou2025}, and another with a spatially varying permeability field; (ii) three highly heterogeneous permeability fields from the SPE10 dataset \cite{SPE10} (\texttt{http://www.spe.org/web/csp}
); and (iii) three challenging thin high- and low-permeability inclusions cases with strong contrast, 
previously studied in \cite{interface2021}. 

In our studies our proposed preconditioning strategy is compared against the use of the non-overlapping MRCM-based preconditioner (developed in \cite{carvalho2024speeding})
as a reference solver. We refrain from comparing our proposed method against other existing procedures because such comparisons were already carried out in that work.

The computational domains are square regions, defined as $[0,1]\times[0,1]$, for cases (i) and (iii), and a rectangular region for case (ii). In all cases, mixed boundary conditions are imposed: homogeneous Neumann conditions on the top and bottom boundaries, and Dirichlet conditions on the left and right boundaries.

In case (i), the total grid size is $256\times256$. Both problems here use $\cos(2\pi y)$ as the Dirichlet boundary condition on the left and right boundaries. The first problem features a constant permeability $\kappa(x,y)=1$, while the second uses a variable permeability $\kappa(x,y)=1+\sin(\pi x)\sin(\pi y)$. The corresponding source terms are defined as $f(x,y)=8\pi ^2 \cos(2\pi x)\cos(2\pi y)$ for the constant case, and $f(x,y)=2\pi ^2 \cos(\pi x)\cos(\pi y)(1+2\sin(\pi x)\sin(\pi y))$ for the variable case. As a result, the analytical solutions are $p(x,y)=\cos(2\pi x)\cos(2\pi y)$ for the constant permeability case, and $p(x,y)=\cos(\pi x)\cos(\pi y)$ for the variable permeability case. Analytical solutions are available for reference.

In case (ii), the total grid size is $220\times60$, and the pressure is fixed at $1$ on the left boundary and $0$ on the right boundary, with the source term set to $f(x,y)=0$. The computational domain is considered under two partitioning configurations: an $11\times3$ subdomain structure with $H=1/3$, where each subdomain contains a local $20\times20$ grid, and a $22\times 6$ subdomain structure with $H=1/6$, where each subdomain contains a local $10\times10$ grid. The permeability field $\kappa(x,y)$ is selected from three different layers within the SPE10 dataset. Layer 34 features moderate contrast and no high-permeability channel. Layer 40 contains a high-permeability channel, posing greater numerical challenges for the original MRCM. Layer 84 exhibits the highest contrast ratio among all SPE10 layers but lacks a channel structure. 

Case (iii) has the total grid size as $160\times160$, with the domain partitioned into $8\times8$ subdomains, each with a local $20\times20$ grid. The pressure is fixed at $1$ on the left boundary and $0$ on the right boundary, and the source term is $q(x,y)=0$. The permeability field $\kappa(x,y)$ is chosen from three examples studied in  \cite{interface2021}:  high-permeability inclusions, low-permeability inclusions, and a combined configuration. 
The first two have contrast $10^4$, while the combined case has contrast $10^8$.


Table \ref{table:pre:notation} provides the notation utilized in the numerical experiments. For Sections \ref{subsec:analytical} through \ref{subsec:fracture}, we report the $\ell^2$ norm of the algebraic residual $\|b - Ap_m\|_2$. In all subsequent sections, the dimension of the Krylov subspace is fixed at 10.

\begin{table}[htbp]
  \centering
  \begin{tabular}{|l|l|}
    \hline
    \textbf{Notation} & \textbf{Description} \\
    \hline
    \textit{MRCM} & Original MRCM method \\
    \textit{$lh, kS$} & Solution with oversampling size $lh$, followed by $k$ smoothing steps\\
    \textit{$\#$} & Number of iterations required to reach the tolerance\\
    \hline
  \end{tabular}
  \caption{Notation used in numerical experiments.}
  \label{table:pre:notation}
\end{table}

Section \ref{subsec:analytical} presents the numerical studies for the problems with analytical solution. Section \ref{subsec:SPE10} reports the numerical results for the SPE10 benchmark, while Section \ref{subsec:fracture} focuses on high-contrast inclusions problems. For all these cases, we investigate how different subdomain partitioning, oversampling sizes and varying numbers of smoothing steps influence the performance of the method.

Based on the results obtained from MRCM-OS \cite{MRCM-OS, zhou2025}, all subsequent analyses will be conducted using the $\alpha$ parameter set to 10, as this yields the best outcomes. The tolerance of the residual is set as $10^{-8}$.

\subsection{Problems with analytical solution}\label{subsec:analytical}

We report results for different oversampling sizes and numbers of smoothing steps, across four subdomain partition configurations: $2\times 2$, $4\times 4$, $8\times 8$, and $16\times 16$. The MRCM-OS configuration includes oversampling sizes of $2h$ and $4h$, as well as 2 and 4 smoothing steps. Tables \ref{table:num:analytical constant} and \ref{table:num:analytical variable} present the iteration counts required for both homogeneous and variable permeability problems to converge under various settings, as well as their final residuals after meeting the tolerance.

\begin{table}[htbp]
  \centering
  \begin{tabular}{|c|c|c|c|c|c|c|c|c|}
    \hline
    \textbf{} & \multicolumn{8}{|c|}{\textbf{Homogeneous permeability field problem}} \\
    \hline
    \textbf{Partition} & \multicolumn{2}{|c|}{$2\times 2$} & \multicolumn{2}{|c|}{$4\times 4$} & \multicolumn{2}{|c|}{$8\times 8$} & \multicolumn{2}{|c|}{$16\times 16$} \\
    \hline
    \diagbox[width=6em,height=1.5em]{\textbf{}} & {$\#$} & {Residual}  & {$\#$} & {Residual} & {$\#$} & {Residual} & {$\#$} & {Residual} \\
    \hline
    \textit{MRCM} & 9 & $2.3\times10^{-9}$ & 11 & $6.1\times10^{-9}$  & 8 & $3.9\times10^{-9}$ & 5 & $3.7\times10^{-9}$  \\
    \hline
    \textit{$2h$} & 2 & $8.2\times10^{-12}$ & 2 & $4.0\times10^{-10}$ & 2 & $8.2\times10^{-12}$ & 2 & $8.2\times10^{-12}$  \\
    \hline
    \textit{$4h$}  & 2 & $8.2\times10^{-12}$  & 2 & $8.4\times10^{-12}$ & 2 & $8.2\times10^{-12}$ & 2 & $8.2\times10^{-12}$  \\
    \hline
    \textit{$2h,2S$} & 1 & $7.3\times10^{-9}$ &  2 & $8.2\times10^{-12}$ & 1 & $2.2\times10^{-10}$ & 1 & $1.4\times10^{-10}$  \\
    \hline
    \textit{$2h,4S$} & 1 & $1.5\times10^{-10}$ &  1 & $4.0\times10^{-10}$ & 1 & $1.5\times10^{-10}$ & 1 & $1.5\times10^{-10}$  \\
    \hline
    \textit{$4h,2S$} & 1 & $1.6\times10^{-10}$ &  1 & $5.9\times10^{-10}$ & 1 & $1.5\times10^{-10}$ & 1 & $1.5\times10^{-10}$  \\
    \hline
    \textit{$4h,4S$} & 1 & $1.5\times10^{-10}$ &  1 & $1.5\times10^{-10}$ & 1 & $1.5\times10^{-10}$ & 1 & $1.5\times10^{-10}$  \\
    \hline
  \end{tabular}
  \caption{Number of iterations and corresponding final residuals for a constant permeability problem with analytical solution.}
  \label{table:num:analytical constant}
\end{table}

\begin{table}[htbp]
  \centering
  \begin{tabular}{|c|c|c|c|c|c|c|c|c|}
    \hline
    \textbf{} & \multicolumn{8}{|c|}{\textbf{Variable permeability field problem}} \\
    \hline
    \textbf{Partition} & \multicolumn{2}{|c|}{$2\times 2$} & \multicolumn{2}{|c|}{$4\times 4$} & \multicolumn{2}{|c|}{$8\times 8$} & \multicolumn{2}{|c|}{$16\times 16$} \\
    \hline
     \diagbox[width=6em,height=1.5em]{\textbf{}} & {$\#$} & {Residual}  & {$\#$} & {Residual} & {$\#$} & {Residual} & {$\#$} & {Residual} \\
    \hline
     \textit{MRCM} & 6 & $7.9\times10^{-10}$ & 10 & $8.4\times10^{-9}$  & 7 & $6.8\times10^{-9}$ & 5 & $2.0\times10^{-9}$\\
    \hline
    \textit{$2h$}& 2 & $1.4\times10^{-11}$ & 2 & $1.1\times10^{-10}$  & 2 & $1.4\times10^{-11}$ & 2 & $1.4\times10^{-11}$\\
    \hline
    \textit{$4h$} & 2 & $1.4\times10^{-9}$ & 2 & $1.4\times10^{-11}$  & 2 & $1.4\times10^{-11}$ & 1 & $7.1\times10^{-9}$\\
    \hline
    \textit{$2h,2S$}& 2 & $1.2\times10^{-9}$ & 2 & $1.4\times10^{-11}$  & 1 & $9.5\times10^{-11}$ & 1 & $9.5\times10^{-11}$\\
    \hline
    \textit{$2h,4S$} & 1 & $8.9\times10^{-11}$ & 1 & $2.5\times10^{-10}$  & 1 & $1.0\times10^{-10}$ & 1 & $8.8\times10^{-11}$\\
    \hline
    \textit{$4h,2S$} & 1 & $9.4\times10^{-11}$ & 1 & $3.0\times10^{-10}$  & 1 & $8.0\times10^{-11}$ & 1 & $8.6\times10^{-11}$\\
    \hline
    \textit{$4h,4S$}  & 1 & $9.1\times10^{-11}$ & 1 & $1.2\times10^{-10}$  & 1 & $9.4\times10^{-11}$ & 1 & $8.4\times10^{-11}$\\
    \hline
  \end{tabular}
  \caption{Number of iterations and corresponding final residuals for the problem with a variable permeability field with analytical solution.}
  \label{table:num:analytical variable}
\end{table}

Since a smaller number of GMRES iterations indicates faster convergence, Figs. \ref{22} and \ref{44} illustrate the residual decay with respect to the number of GMRES iterations for different parameter settings in the $2\times 2$ and $4\times 4$ partitions, for both constant and variable permeability fields, providing a clear picture of how the residual decreases across these settings. Other configurations ($8\times 8$ and $16\times 16$ partitions) reach the tolerance even more quickly than these two cases.

\begin{figure}[H]
    \centering
    \includegraphics[width = 0.48\textwidth]{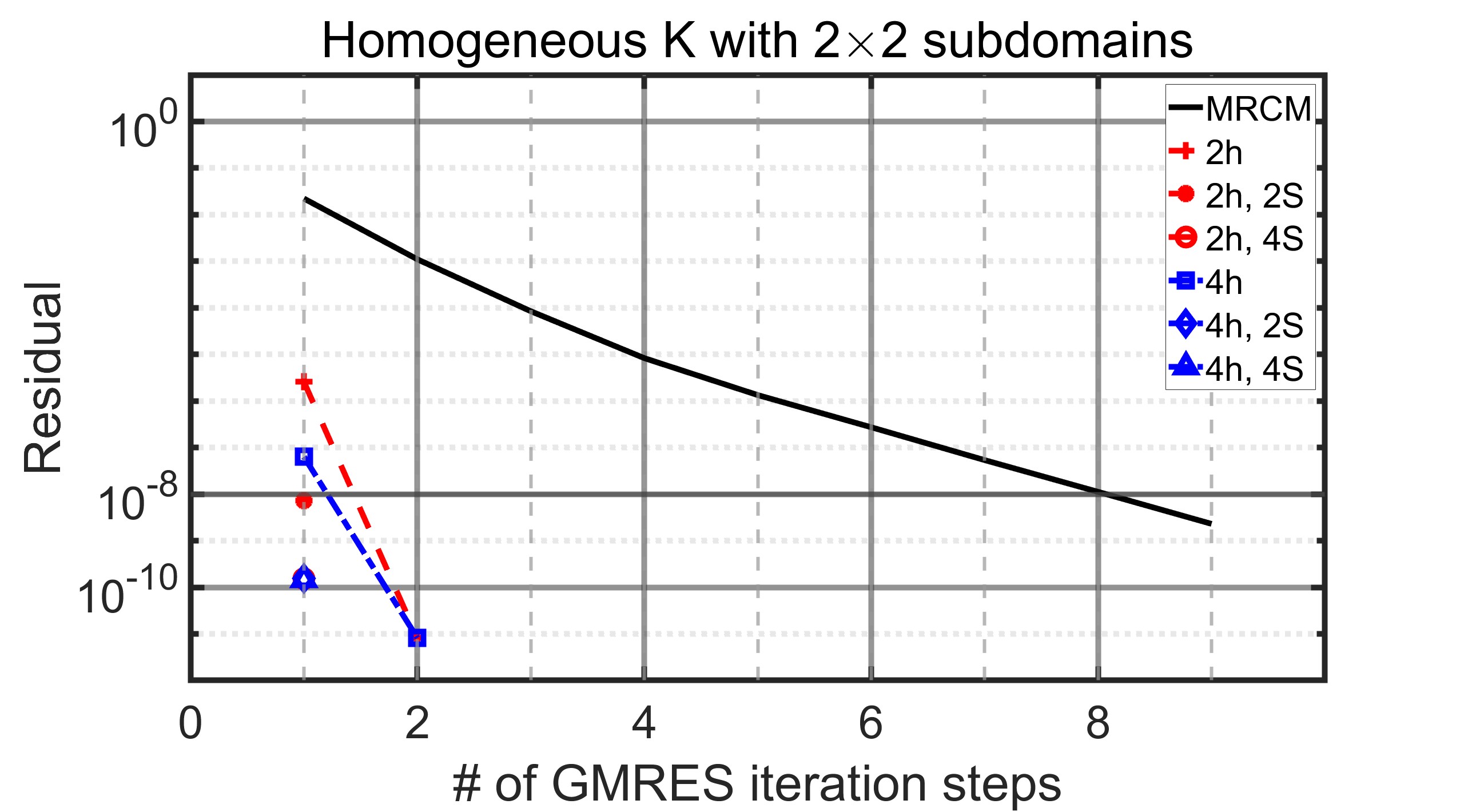}
    \includegraphics[width = 0.48\textwidth]{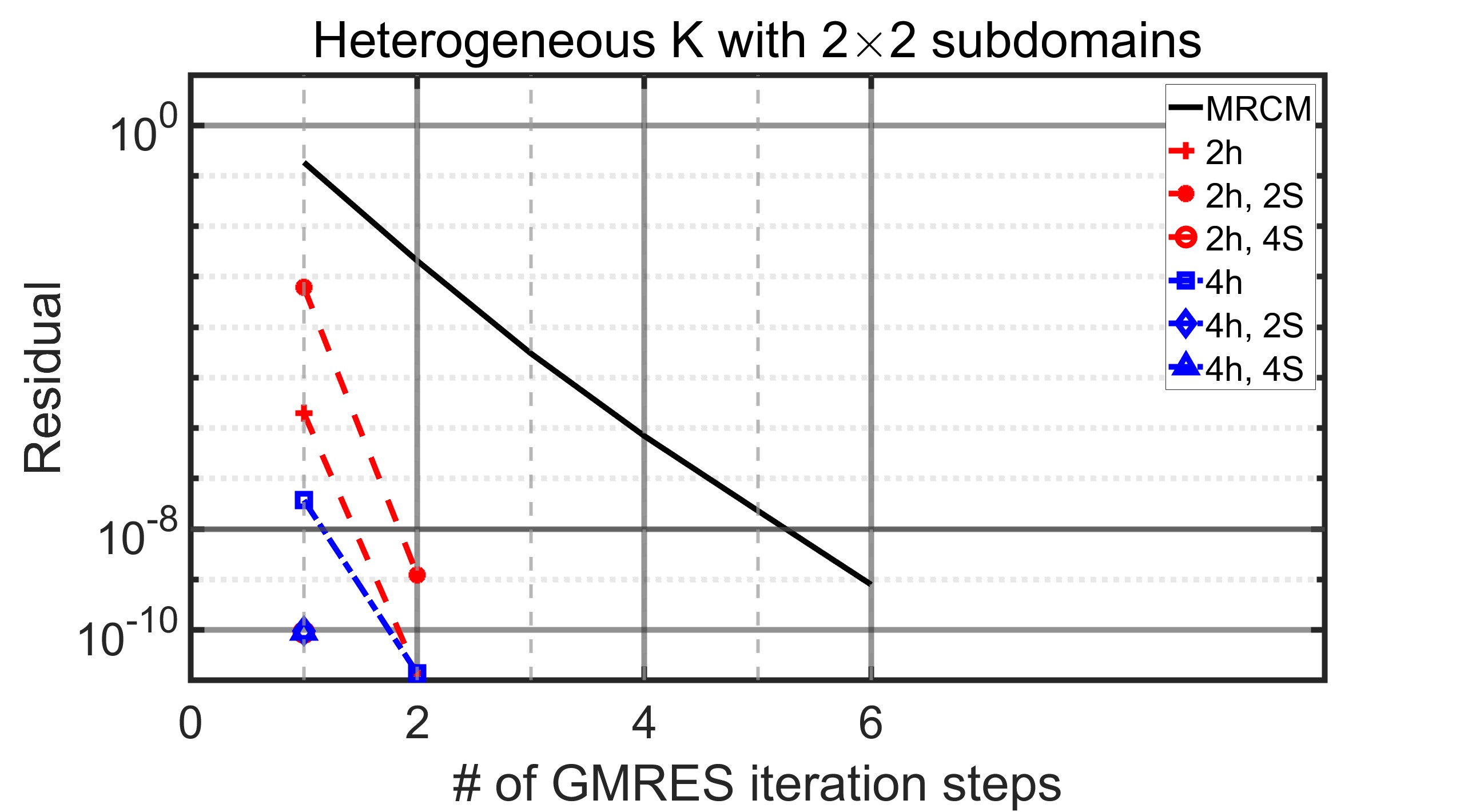}
    \caption{Number of iterations: $2\times 2$ partition for the constant permeability problem (left) and the variable permeability problem (right).}
    \label{22}
\end{figure}

\begin{figure}[H]
    \centering
    \includegraphics[width = 0.48\textwidth]{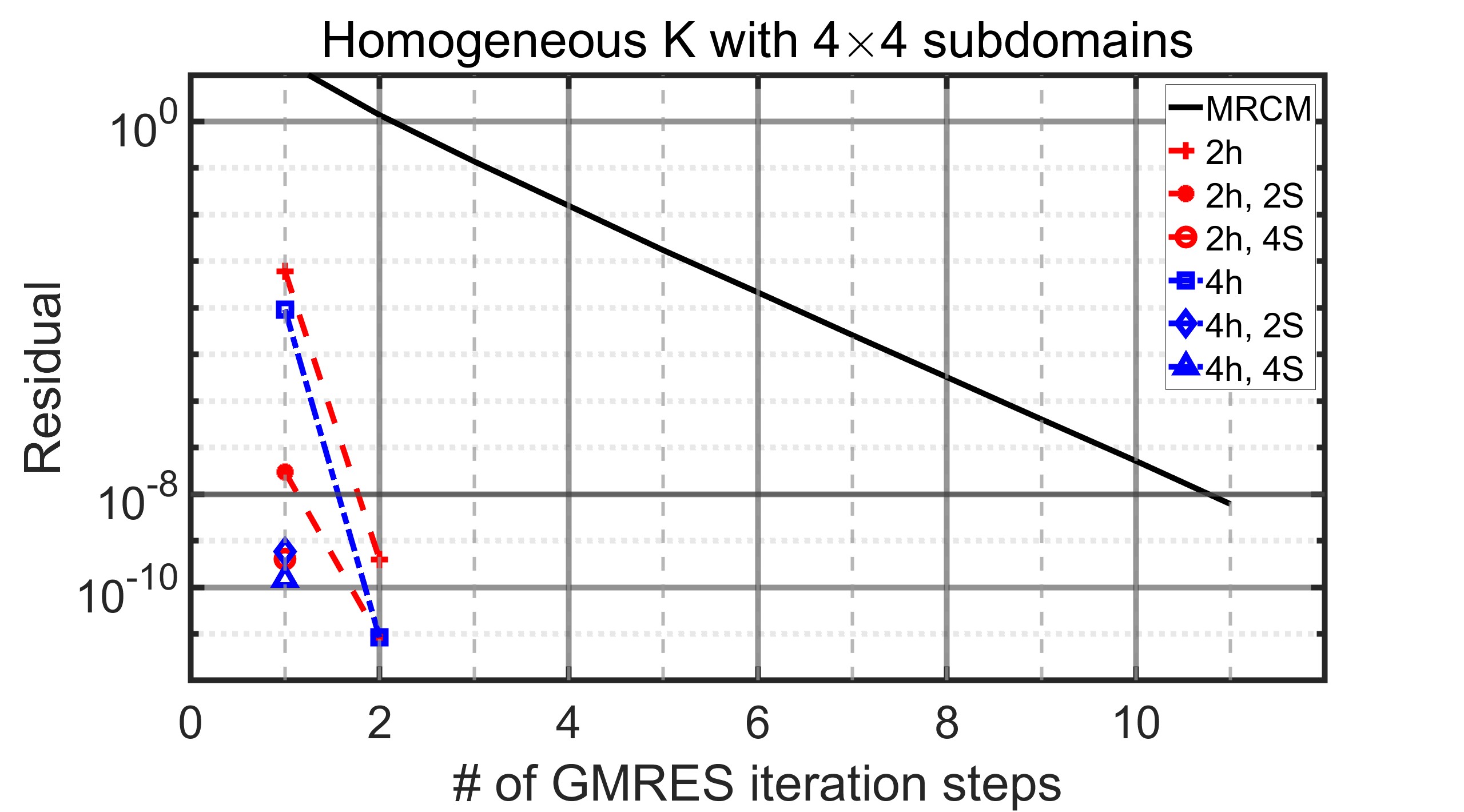}
    \includegraphics[width = 0.48\textwidth]{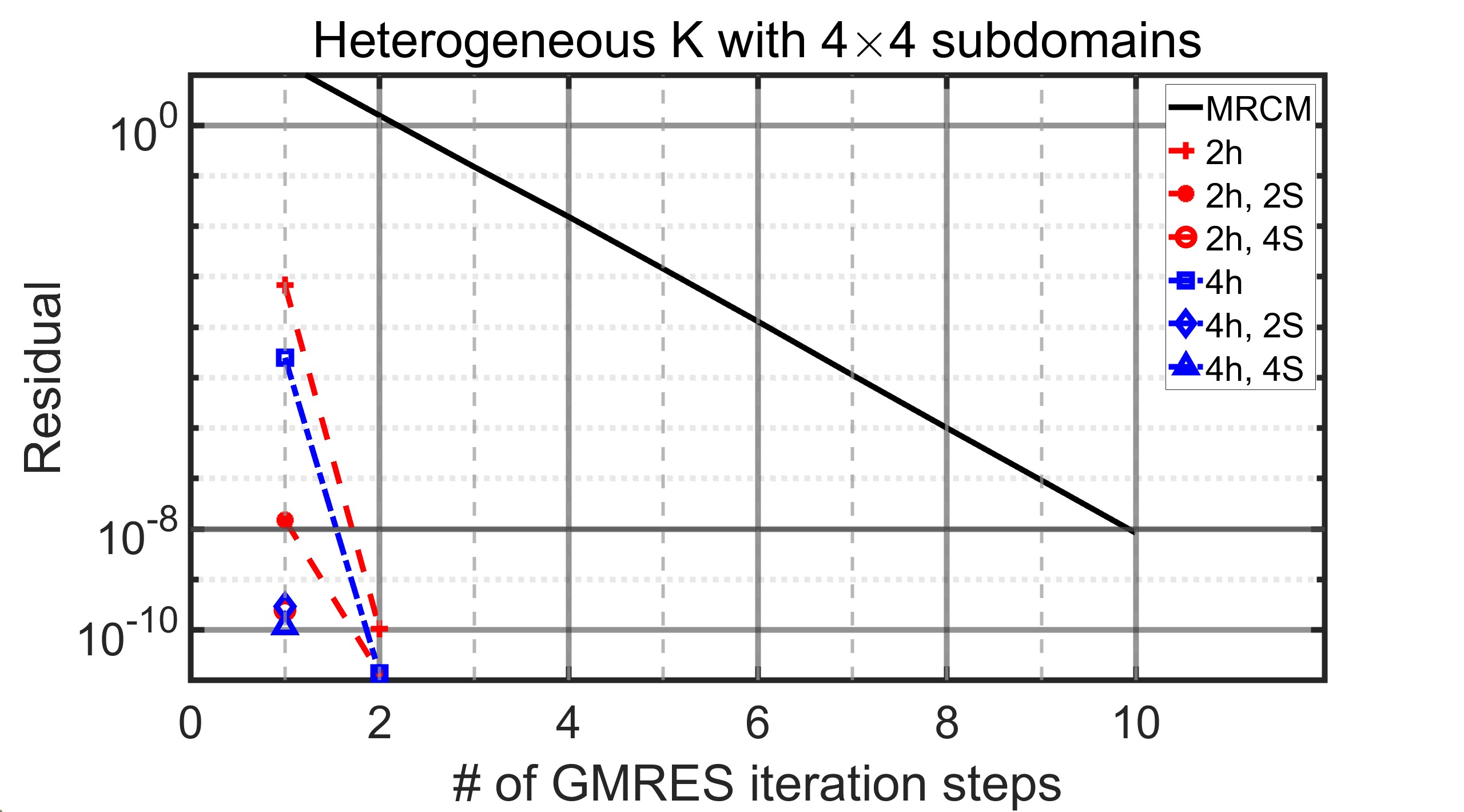}
     \caption{Number of iterations: $4\times 4$ partition for the constant permeability problem (left) and the variable permeability problem (right).}
    \label{44}
\end{figure}

We observe that, for both constant and variable permeability fields, the use of the original MRCM as the reference preconditioner for GMRES consistently requires more than five iterations to reach the desired tolerance. In contrast, employing MRCM-O (MRCM-OS without the smoothing steps) reduces the iteration count to just two. Furthermore, when smoothing procedures are applied, the method converges even faster, requiring only a single iteration for most situations.

We note that when two results have the same number of iterations, the one with smaller final residual is considered superior. These results also highlight that, in the partition-refinement study, increasing the number of subdomains improves performance when the oversampling size and the number of smoothing steps are held constant.

\subsection{Layers of the SPE10 dataset}\label{subsec:SPE10}

In this section, we focus on heterogeneous problems for three different layers from the SPE10 project. Figs. \ref{SPE10perm} shows high-contrast permeability fields of the layers we consider in our studies on a $log_{10}$ scale.

\begin{figure}[H]
    \centering
    \includegraphics[width = 1.1\textwidth]{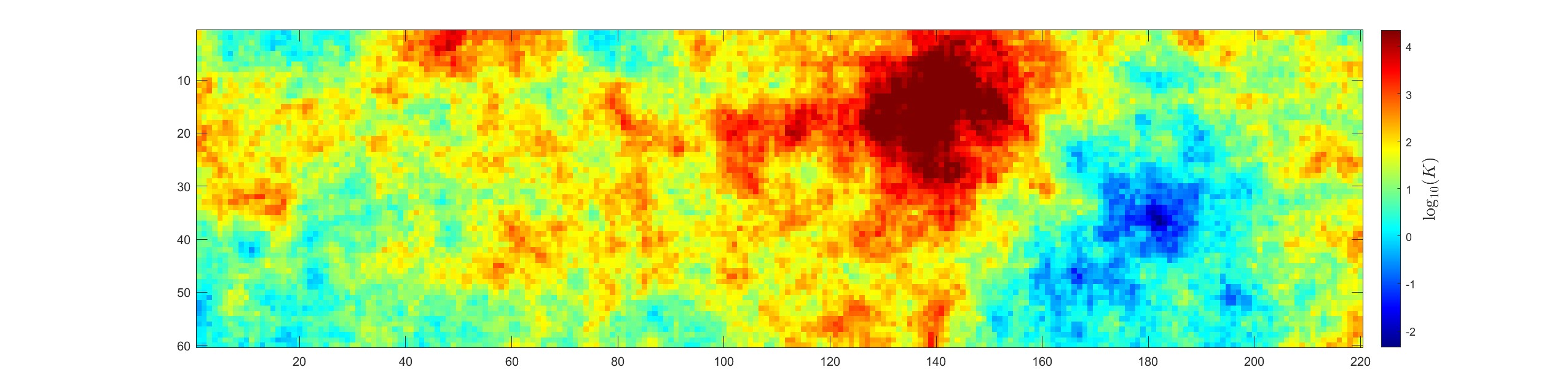}
    \includegraphics[width = 1.1\textwidth]{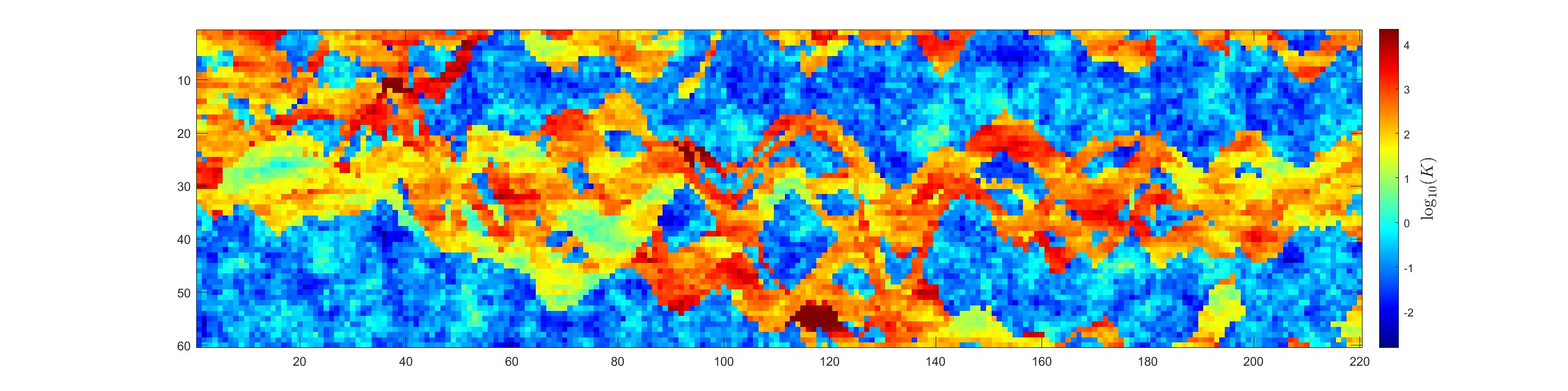}
    \includegraphics[width = 1.1\textwidth]{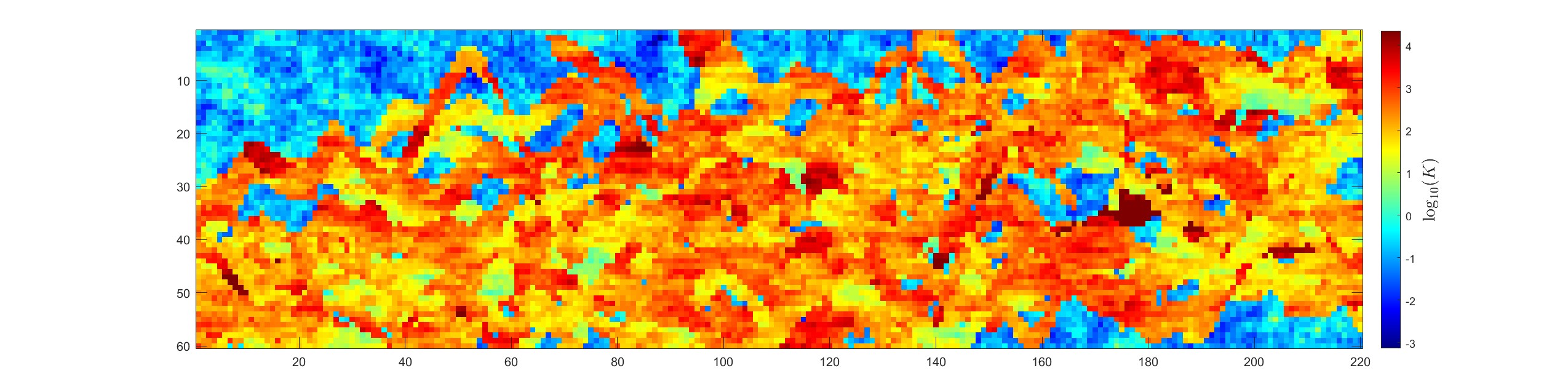}
    \caption{Log10 of permeability fields: Layer 34 (top), 40 (middle), 84 (bottom).}
    \label{SPE10perm}
\end{figure}

Similar to Section~\ref{subsec:analytical}, Tables \ref{table:num:SPE10 11,3} and \ref{table:num:SPE10 22,6} show the number of iterations required for the three selected layers to reach the specified tolerance using the same MRCM-OS configuration. The subdomain refinement study is conducted for two partitioning strategies: $11\times 3$ and $22\times 6$.

\begin{table}[htbp]
  \centering
  {SPE10: Domain Decomposition: $11\times3$}

  \begin{tabular}{|c|c|c|c|c|c|c|}
    \hline
    \textbf{Layer} & \multicolumn{2}{|c|}{\textbf{34}} & \multicolumn{2}{|c|}{\textbf{40}}  & \multicolumn{2}{|c|}{\textbf{84}}\\
    \hline
    \diagbox[width=4.5em,height=1.5em]{\textbf{}} & {$\#$} & {Residual}  & {$\#$} & {Residual} & {$\#$} & {Residual} \\
    \hline
    \textit{MRCM} & 9 & $5.8\times10^{-9}$ & 30+ & $1.2\times10^{-1}$ & 18 & $7.2\times10^{-9}$\\
    \hline
    \textit{$2h$} & 2 & $5.9\times10^{-9}$  & 12 & $7.5\times10^{-9}$ & 3 & $1.9\times10^{-9}$\\
    \hline
    \textit{$4h$}  & 2 & $5.8\times10^{-9}$  & 4 & $2.0\times10^{-9}$ & 2 & $2.6\times10^{-9}$\\
    \hline
    \textit{$2h,2S$} & 2 & $5.7\times10^{-9}$ & 3 & $2.0\times10^{-9}$ & 2 & $1.8\times10^{-9}$\\
    \hline
    \textit{$2h,4S$} & 1 & $9.6\times10^{-9}$ & 2 & $2.0\times10^{-9}$ & 2 & $1.9\times10^{-9}$\\
    \hline
    \textit{$4h,2S$} & 1 &$ 9.0\times10^{-9}$ & 2 & $2.0\times10^{-9}$ & 1 & $2.6\times10^{-9}$\\
    \hline
    \textit{$4h,4S$} & 1 & $7.8\times10^{-9}$ & 2 & $2.1\times10^{-9}$ & 1 & $2.5\times10^{-9}$\\
    \hline
  \end{tabular}
  \caption{Number of iterations and corresponding final residuals for three SPE10 layers for the $11\times3$ subdomain partition.}
  \label{table:num:SPE10 11,3}
\end{table}

\begin{table}[htbp]
  \centering
  {SPE10: Domain Decomposition: $22\times6$}

  \begin{tabular}{|c|c|c|c|c|c|c|}
    \hline
    \textbf{Layer} & \multicolumn{2}{|c|}{\textbf{34}} & \multicolumn{2}{|c|}{\textbf{40}}  & \multicolumn{2}{|c|}{\textbf{84}}\\
    \hline
    \diagbox[width=4.5em,height=1.5em]{\textbf{}} & {$\#$} & {Residual}  & {$\#$} & {Residual} & {$\#$} & {Residual} \\
    \hline
    \textit{MRCM}& 6 & $6.0\times10^{-9}$ & 11 & $3.4\times10^{-9}$  &12 & $5.3\times10^{-9}$\\
    \hline
    \textit{$2h$}& 2  & $5.9\times10^{-9}$& 3 & $2.1\times10^{-9}$  & 3 & $1.8\times10^{-9}$\\
    \hline
    \textit{$4h$}& 2 & $5.8\times10^{-9}$ & 2 & $2.0\times10^{-9}$  & 2 & $2.0\times10^{-9}$\\
    \hline
    \textit{$2h,2S$}&2 & $8.4\times10^{-9}$ &  2 & $2.1\times10^{-9}$ & 2 & $1.8\times10^{-9}$\\
    \hline
    \textit{$2h,4S$}&  1 & $8.8\times10^{-9}$  & 1 & $2.7\times10^{-9}$  & 1 & $2.3\times10^{-9}$\\
    \hline
    \textit{$4h,2S$}&  1 & $7.8\times10^{-9}$ & 1 & $2.5\times10^{-9}$  & 1 & $2.4\times10^{-9}$\\
    \hline
    \textit{$4h,4S$}&  1 & $8.6\times10^{-9}$  & 1 & $2.2\times10^{-9}$  & 1 & $2.4\times10^{-9}$\\
    \hline
  \end{tabular}
  \caption{Number of iterations and corresponding final residuals for three SPE10 layers for the $22\times6$ subdomain partition.}
  \label{table:num:SPE10 22,6}
\end{table}

An iteration count exceeding 30 (30+ inside the table) for the original MRCM method applied to layer 40 with the $11\times 3$ subdomain partition indicates that more than 30 iterations are required to reach the specified tolerance. Fig. \ref{SPE10_40} illustrates the residual decay for the high permeability layer, following the same format as in the previous subsection.

\begin{figure}[H]
    \centering
    \includegraphics[width = 0.48\textwidth]{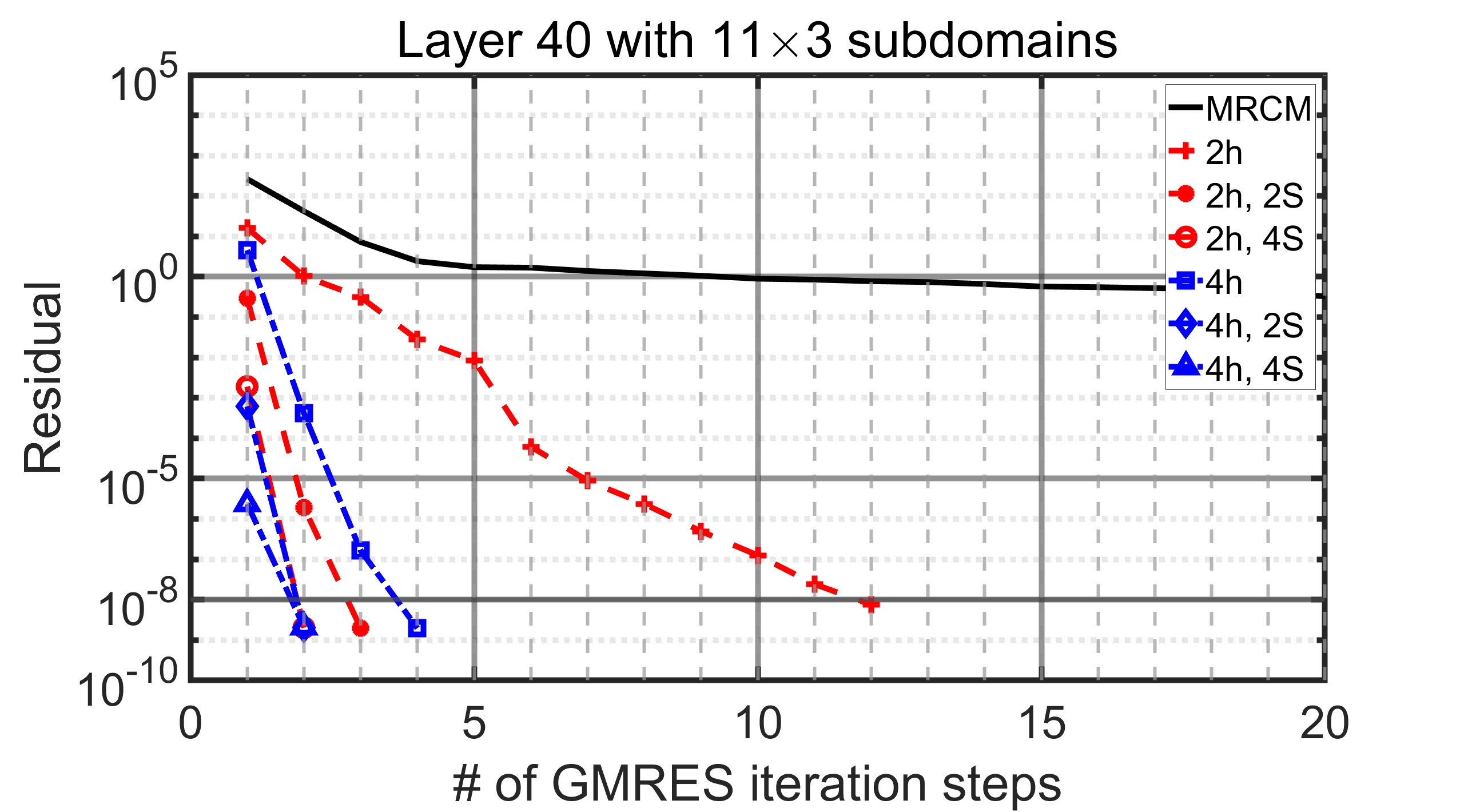}
    \includegraphics[width = 0.48\textwidth]{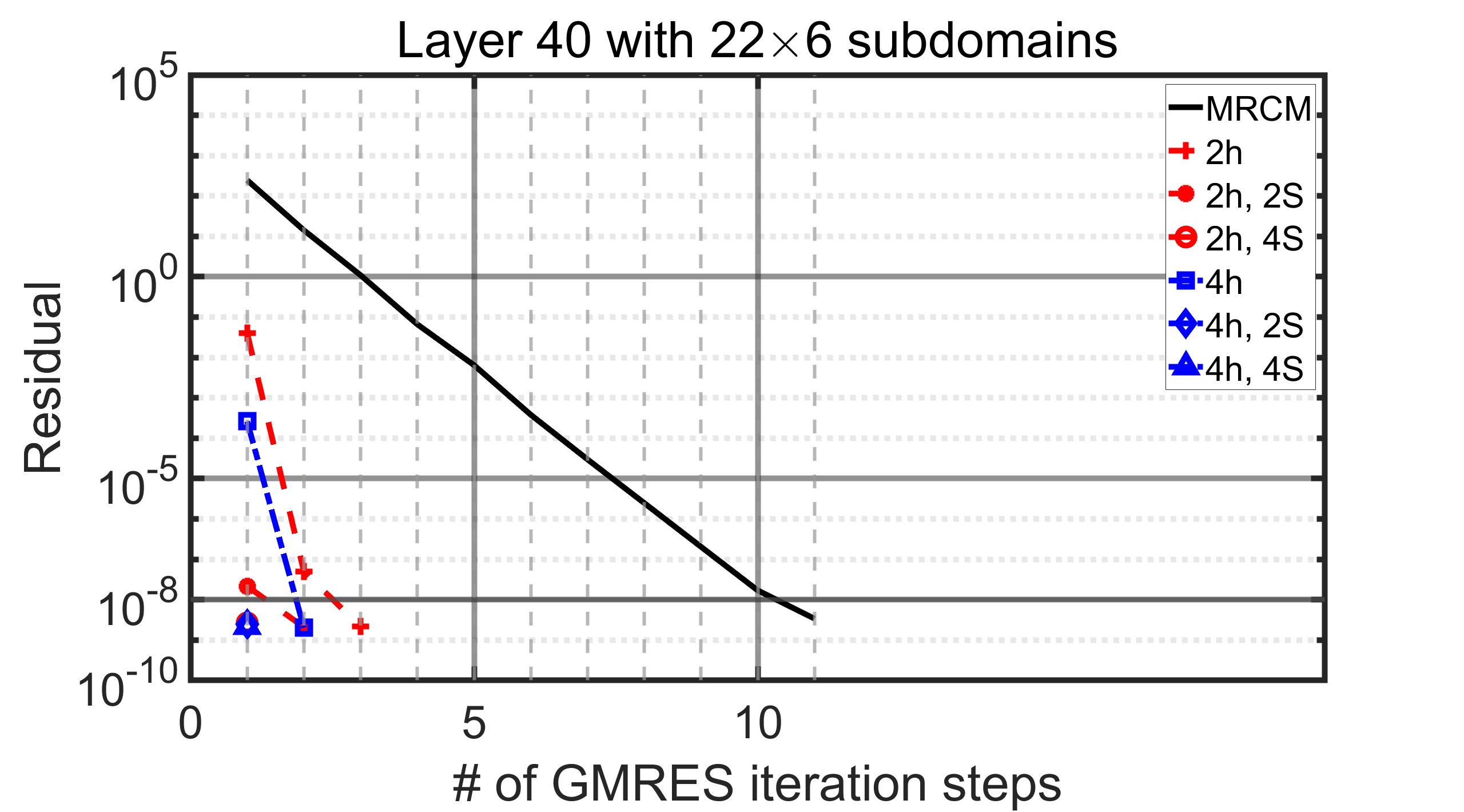}
    \caption{Number of iterations: layer 40 for the SPE10 problem using $11\times 3$ (left) and $22\times 6$ (right) subdomain partitions.}
    \label{SPE10_40}
\end{figure}

From the above results, we observe that layer 34 is the easiest to solve, while layer 40 is the most challenging. This indicates that the presence of a high-permeability channel poses a greater 
challenge for the solver.
Additionally, among the different domain partitions, the $22\times 6$ configuration performs better than the $11\times 3$ configuration, indicating that using more subdomains leads to fewer iterations to reach the tolerance.
 Applying both oversampling and smoothing techniques for the $22\times 6$ partition yields the best performance, typically requiring only one GMRES iteration to reach the desired tolerance. Even for the most difficult case (layer 40) with minimal oversampling and smoothing, convergence is achieved within three iterations. When only oversampling is used, two to three iterations are generally sufficient, even for layer 40. In the partition refinement study, the trend remains consistent with earlier observations: increasing the number of subdomains results in fewer iterations needed to meet the tolerance.

\subsection{Problems with thin high-contrast inclusions}\label{subsec:fracture}
In this section, we consider a set of particularly challenging problems. Three highly difficult cases with high- and -low permeability inclusions, motivated by \cite{interface2021}, are shown in Fig. \ref{fractureperm}. In these cases, the combined (high- and low-permeability inclusions) configuration has a contrast ratio of $10^8$, with the background permeability fixed at 1. The 
permeability fields featuring only high- or low-permeability inclusions have a contrast ratio of $10^4$.

\begin{figure}[H]
    \centering
    \includegraphics[width = 0.49\textwidth]{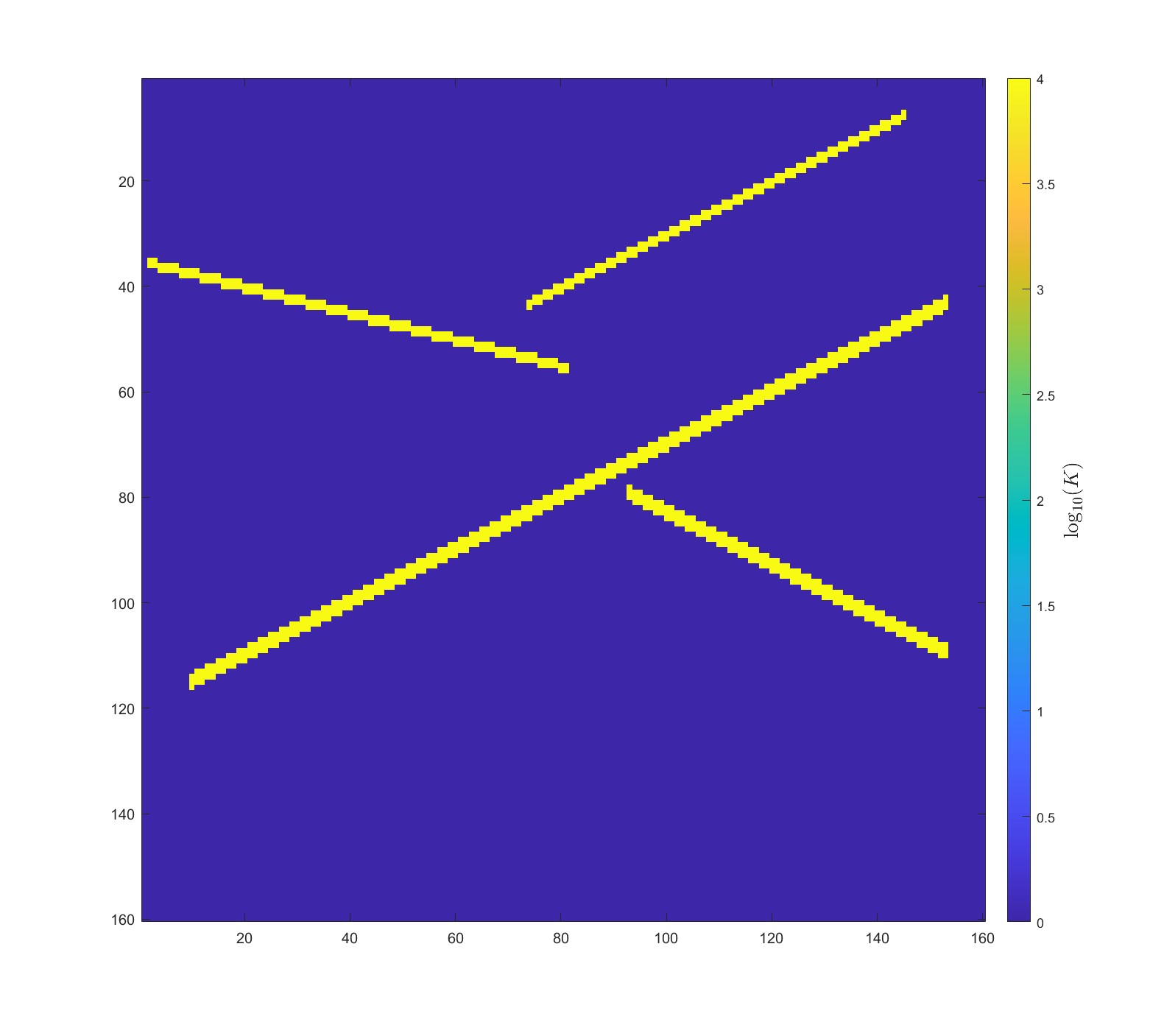}
    \includegraphics[width = 0.49\textwidth]{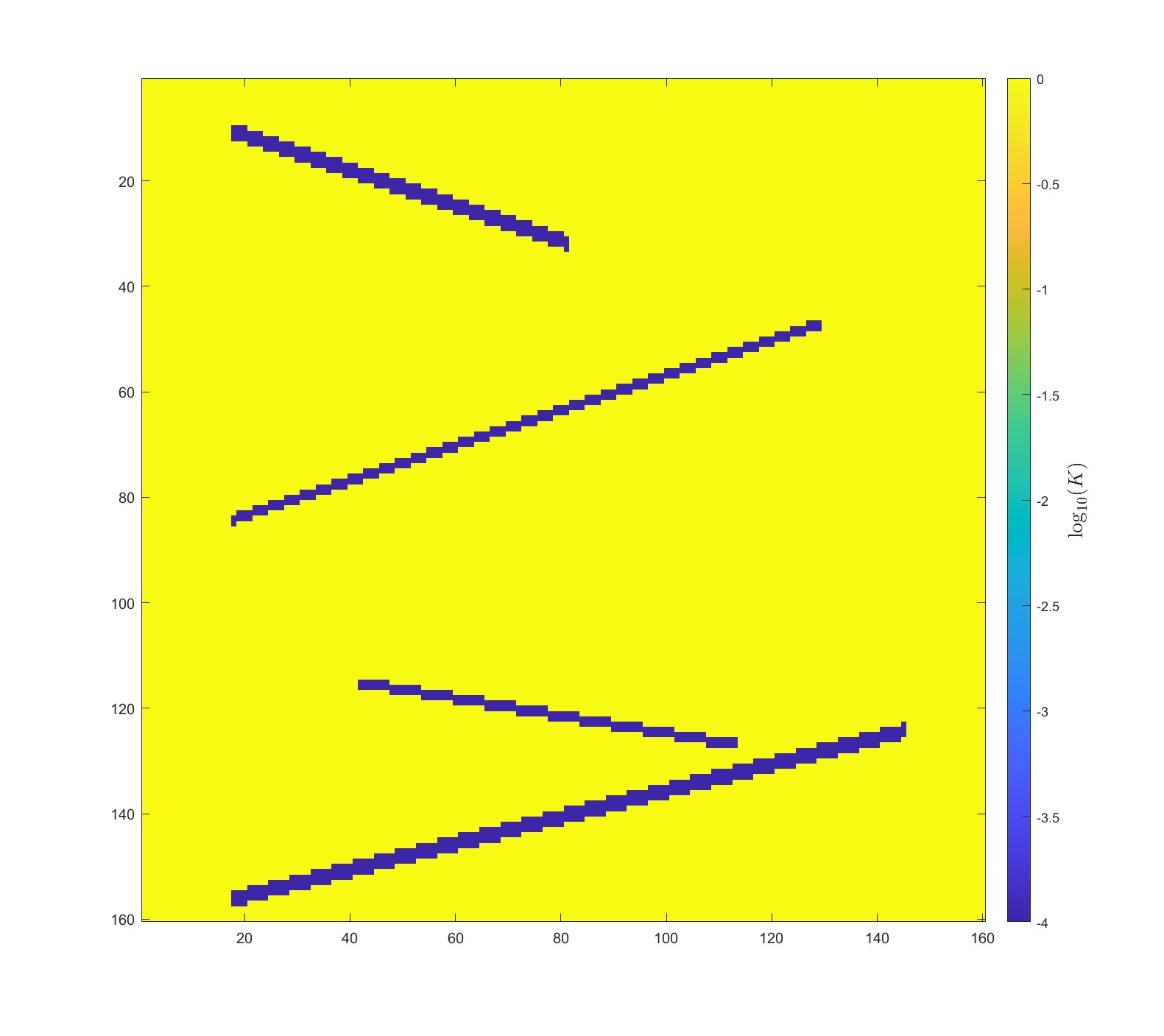}
    \includegraphics[width = 0.49\textwidth]{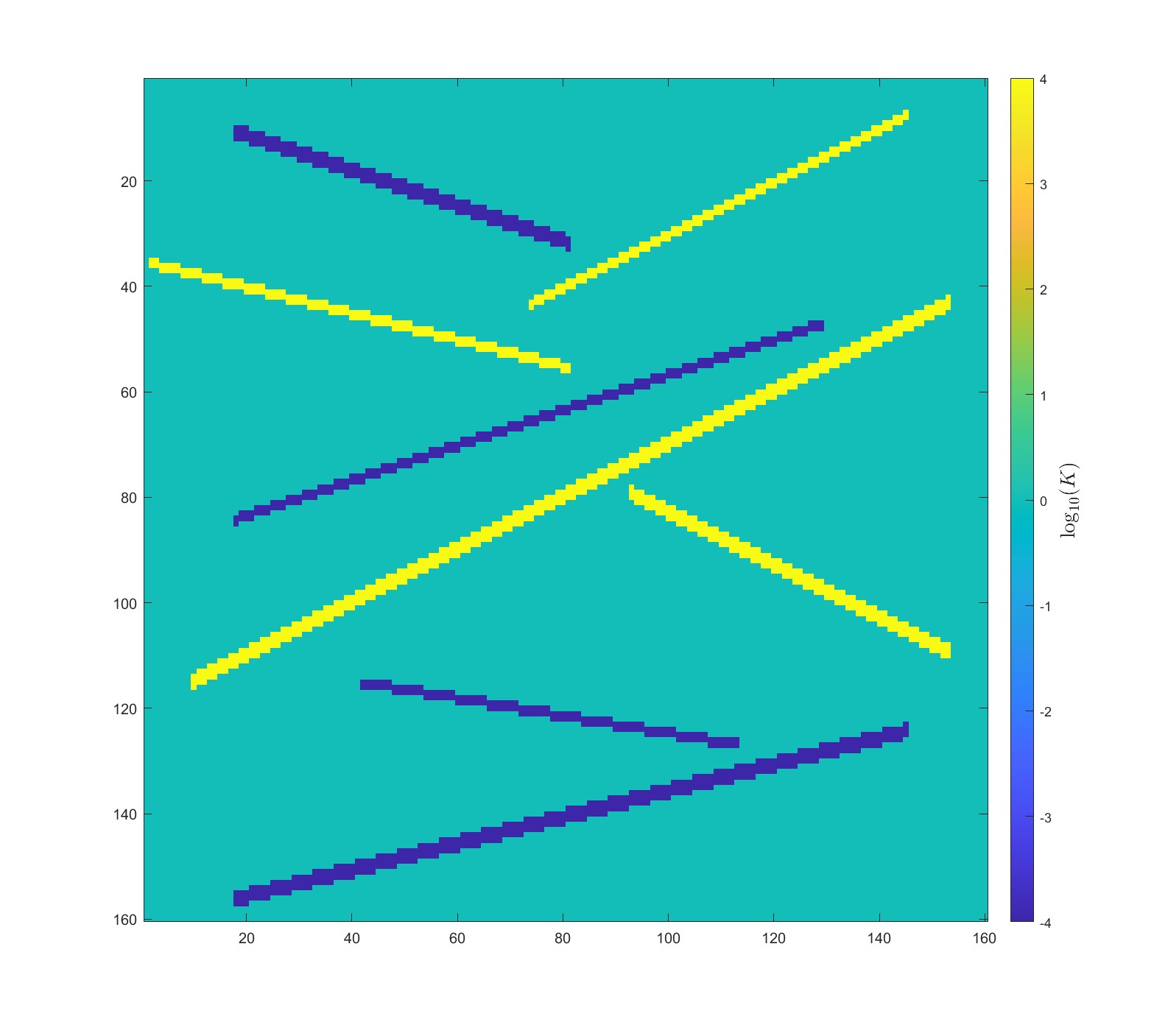}
    \caption{Permeability fields: high-permeability inclusions (left), small-permeability inclusions (right), and a combination of both (bottom).}
    \label{fractureperm}
\end{figure}
Similar to the previous section, Tables~\ref{table:num:fracture1} and \ref{table:num:fracture2} report the number of iterations required to reach convergence (or a prescribed tolerance) for the three selected permeability fields, using the same MRCM-OS configuration.

\begin{table}[htbp]
  \centering
   {High-contrast inclusions: Domain decomposition: $4\times4$}

  \begin{tabular}{|c|c|c|c|c|c|c|}
    \hline
   \textbf{Layer} & \multicolumn{2}{|c|}{\textbf{high-perm. inclusion}} & \multicolumn{2}{|c|}{\textbf{low-perm. inclusion}}  & \multicolumn{2}{|c|}{\textbf{combined inclusions}}\\
    \hline
   \diagbox[width=4.5em,height=1.5em]{\textbf{}} & {$\#$} & {Residual}  & {$\#$} & {Residual} & {$\#$} & {Residual} \\
    \hline
    \textit{MRCM} &30+ &  & 21 & $3.4\times10^{-9}$ & 30+ & \\
    \hline
    \textit{$2h$} & 3 & $8.2\times10^{-9}$  & 3 & $1.5\times10^{-9}$ & 3 & $8.5\times10^{-9}$\\
    \hline
    \textit{$4h$}  & 3 & $7.7\times10^{-9}$  & 3 & $8.2\times10^{-12}$ & 3 & $7.8\times10^{-9}$\\
    \hline
    \textit{$2h,2S$} & 2 & $7.5\times10^{-9}$ &  2 & $6.1\times10^{-12}$ & 2 & $8.2\times10^{-9}$\\
    \hline
    \textit{$2h,4S$} &  2 & $7.5\times10^{-9}$ &  1 & $4.1\times10^{-9}$ & 2 & $7.6\times10^{-9}$\\
    \hline
    \textit{$4h,2S$} &  2 & $7.9\times10^{-9}$ &  1 & $9.1\times10^{-9}$ & 2 & $7.8\times10^{-9}$\\
    \hline
    \textit{$4h,4S$} & 2 &$7.4\times10^{-9}$ &  1 & $8.8\times10^{-12}$ & 2 & $7.3\times10^{-9}$\\
    \hline
  \end{tabular}
  \caption{Number of iterations and corresponding final residuals for the problems with high-permeability inclusions, low permeability inclusions and both inclusions.}
  \label{table:num:fracture1}
\end{table}

\begin{table}[htbp]
  \centering
   {High-contrast inclusions: Domain decomposition: $8\times8$}

  \begin{tabular}{|c|c|c|c|c|c|c|}
    \hline
   \textbf{Layer} & \multicolumn{2}{|c|}{\textbf{high-perm. inclusion}} & \multicolumn{2}{|c|}{\textbf{low-perm. inclusion}}  & \multicolumn{2}{|c|}{\textbf{combined inclusions}}\\
    \hline
   \diagbox[width=4.5em,height=1.5em]{\textbf{}} & {$\#$} & {Residual}  & {$\#$} & {Residual} & {$\#$} & {Residual} \\
    \hline
    \textit{MRCM} &13 & $8.8\times10^{-9}$ & 15 & $8.0\times10^{-9}$ & 16 & $2.1\times10^{-9}$\\
    \hline
    \textit{$2h$} & 3 & $7.5\times10^{-9}$  & 2 & $3.7\times10^{-9}$ & 3 & $8.1\times10^{-9}$\\
    \hline
    \textit{$4h$}  & 2 & $7.5\times10^{-9}$  & 2 & $8.3\times10^{-9}$ & 2 & $8.0\times10^{-9}$\\
    \hline
    \textit{$2h,2S$} & 2 & $7.8\times10^{-9}$ &  2 & $1.5\times10^{-11}$ & 2 & $8.4\times10^{-9}$\\
    \hline
    \textit{$2h,4S$} &  2 & $8.0\times10^{-9}$ &  1 & $6.1\times10^{-12}$ & 2 & $7.8\times10^{-9}$\\
    \hline
    \textit{$4h,2S$} &  2 & $7.8\times10^{-9}$ &  1 & $6.1\times10^{-12}$ & 2 & $8.2\times10^{-9}$\\
    \hline
    \textit{$4h,4S$} & 2 &$7.8\times10^{-9}$ &  1 & $9.4\times10^{-12}$ & 2 & $7.6\times10^{-9}$\\
    \hline
  \end{tabular}
  \caption{Number of iterations and corresponding final residuals for the problems with high-permeability inclusions, low permeability inclusions and both inclusions.}
  \label{table:num:fracture2}
\end{table}

From Tables~\ref{table:num:fracture1} and \ref{table:num:fracture2}, we observe that these problems require more iteration steps than those considered in the previous two sections. Figures~\ref{frac1} and~\ref {frac2} present the results for the three inclusion types, following the same format as in the earlier subsections, and clearly illustrate how the residual decreases over successive iterations.

\begin{figure}[H]
    \centering
    \includegraphics[width = 0.6\textwidth]{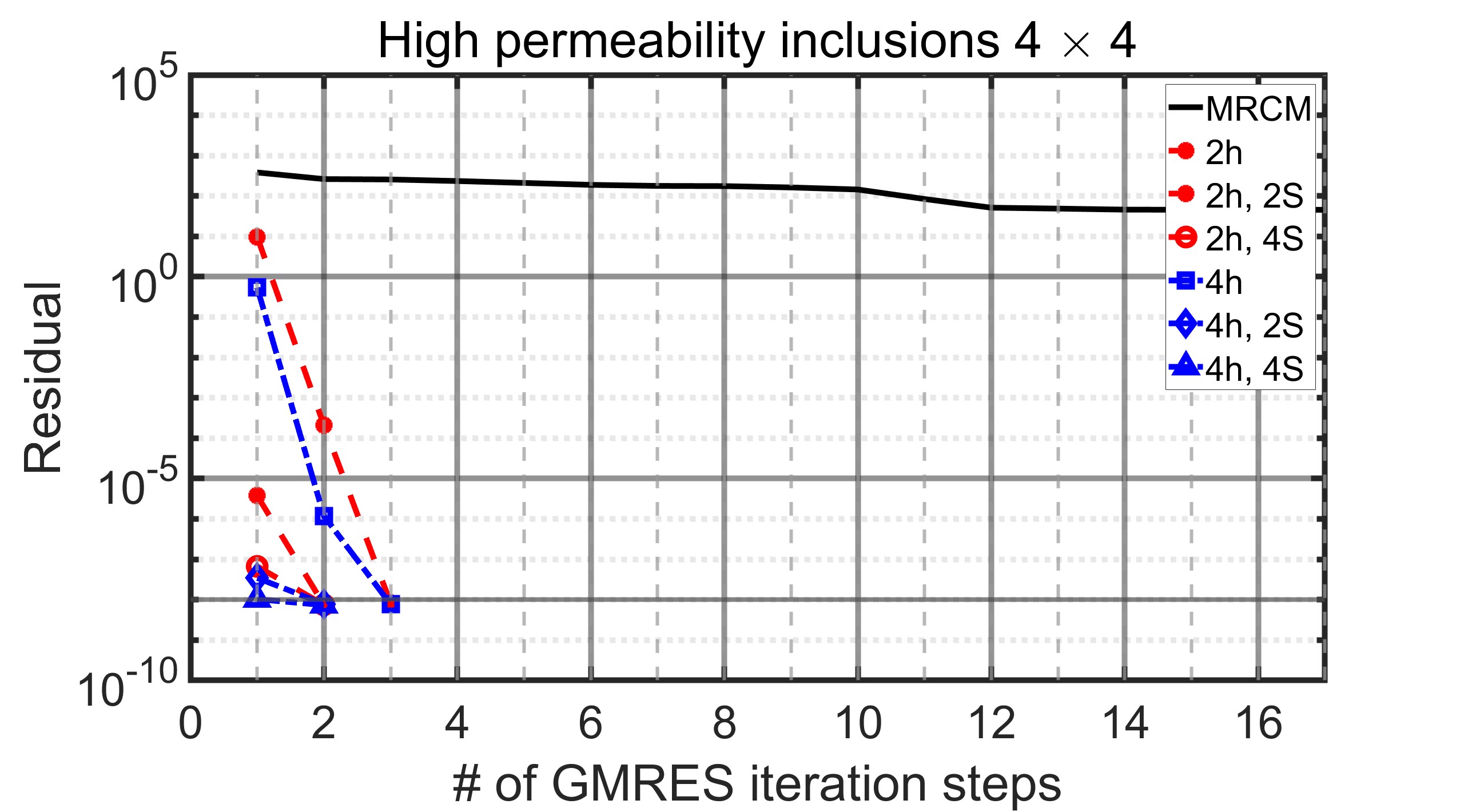}
    \includegraphics[width = 0.6\textwidth]{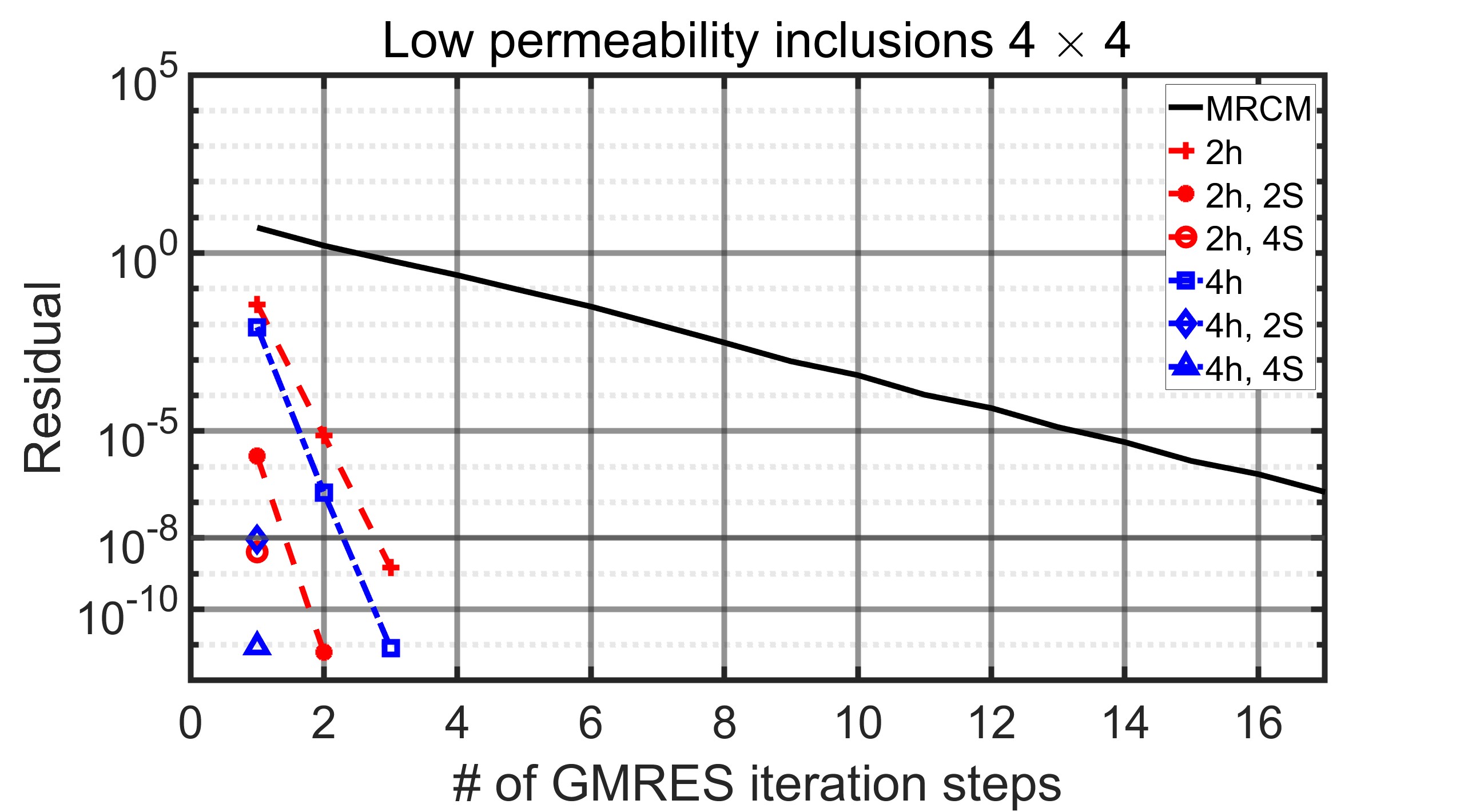}
    \includegraphics[width = 0.6\textwidth]{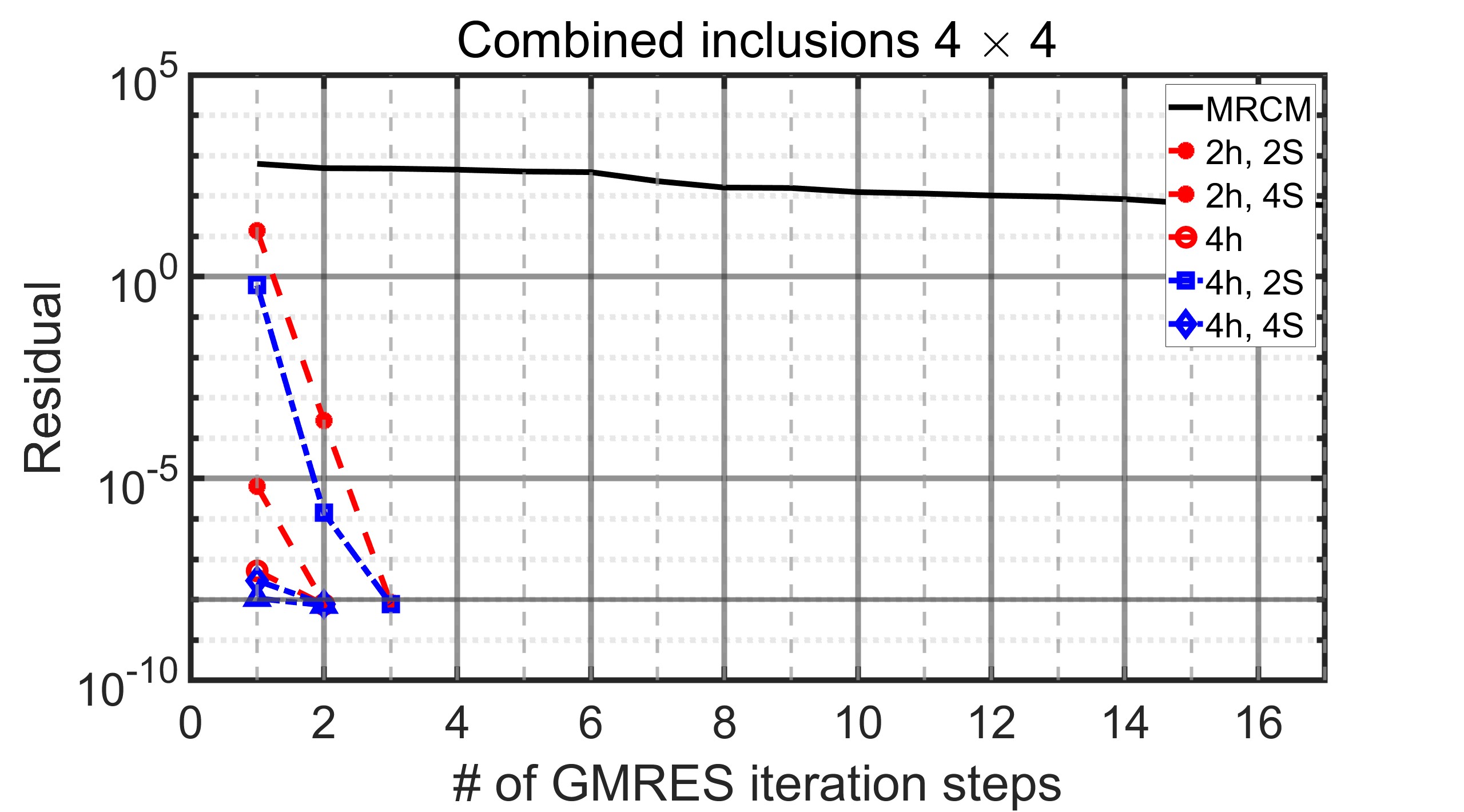}
    \caption{Number of iterations for the $4\times 4$ partition: high-permeability inclusions (top), low-permeability inclusions (middle), and combined inclusions (bottom).}
    \label{frac1}
\end{figure}

\begin{figure}[H]
    \centering
    \includegraphics[width = 0.6\textwidth]{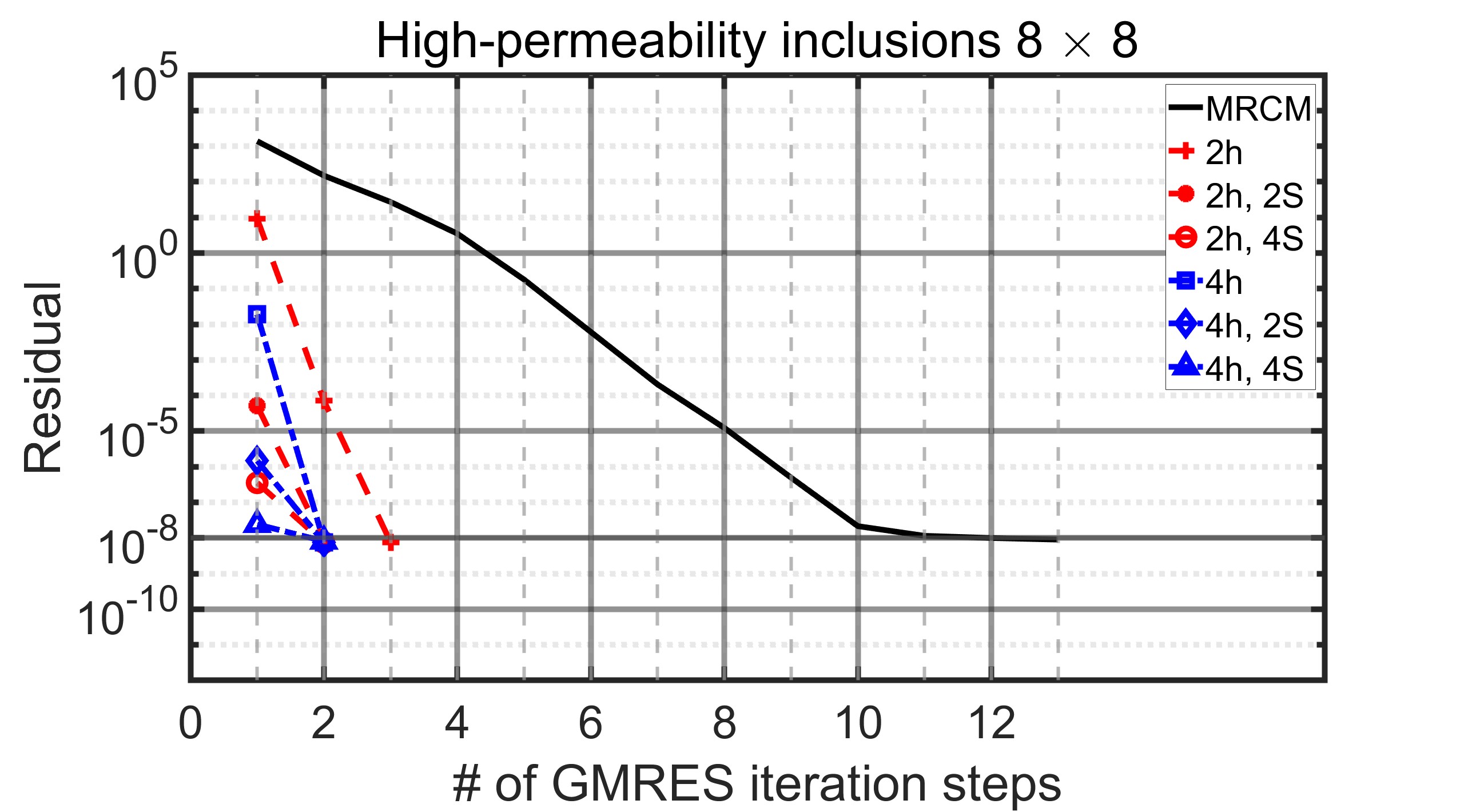}
    \includegraphics[width = 0.6\textwidth]{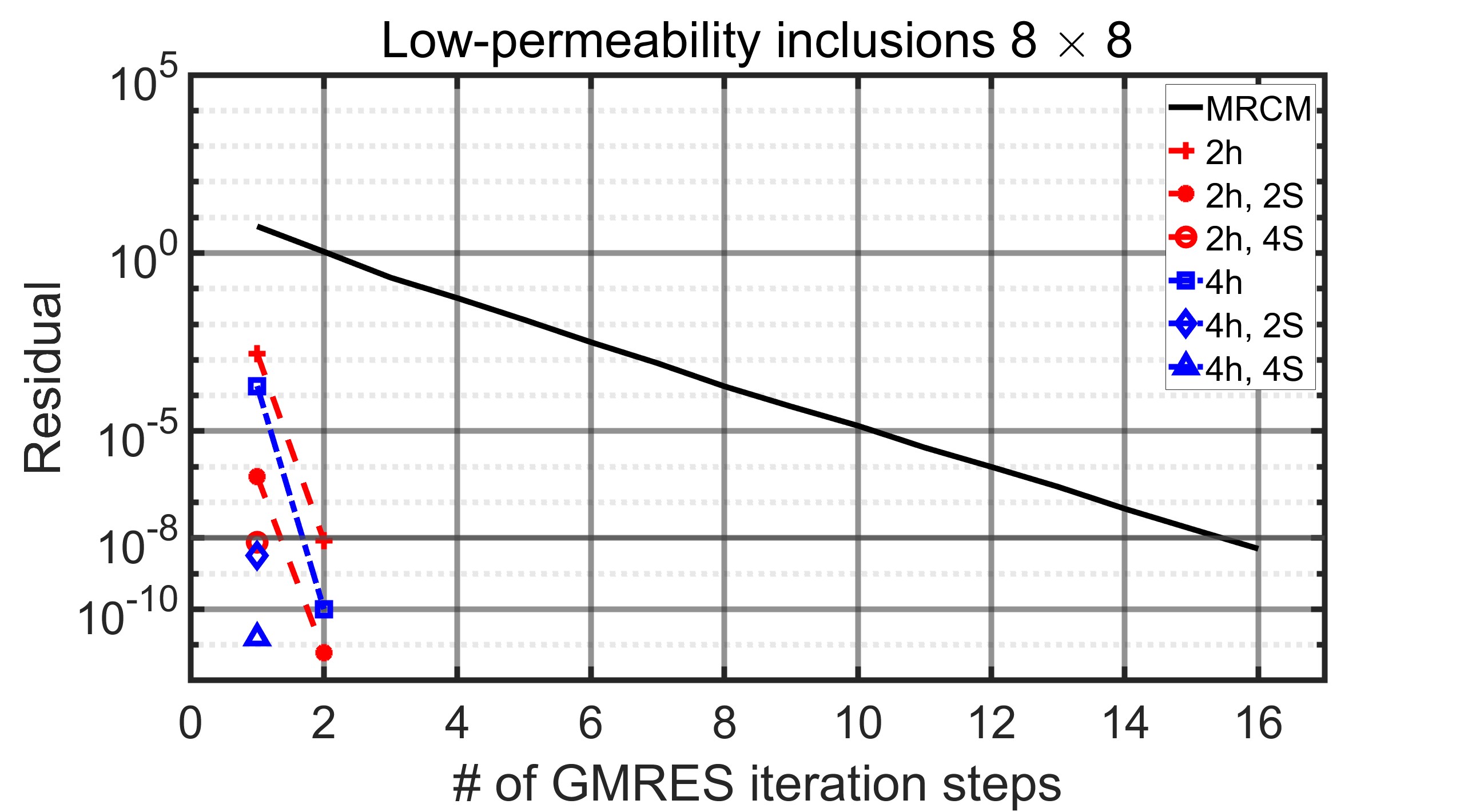}
    \includegraphics[width = 0.6\textwidth]{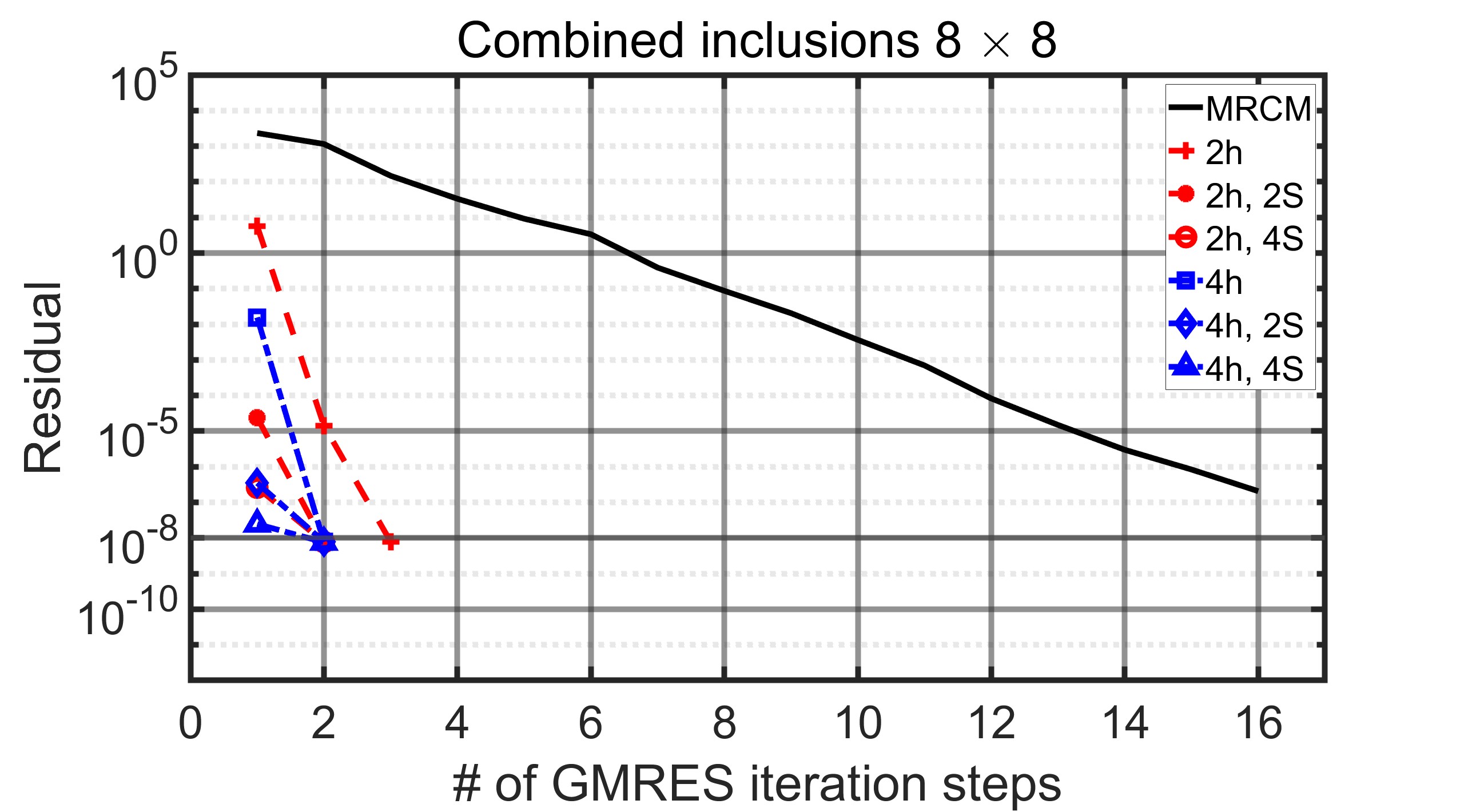}
    \caption{Number of iterations for the $8\times 8$ partition: high-permeability inclusions (top), low-permeability inclusions (middle), and combined inclusions (bottom).}
    \label{frac2}
\end{figure}

From the second picture in Figures ~\ref{frac1} and \ref{frac2}, we can clearly observe the significant improvement achieved by the smoothing procedure in the  low permeability inclusions case, indicating that smoothing has a more substantial effect on fields with such inclusions than on the other two cases. The results for the combined inclusion configuration are very close to those of the high permeability
inclusion case when both oversampling and smoothing are applied, suggesting that when the high- and low-permeability inclusions have the same contrast ratio relative to the background within the same field, the high-permeability inclusions dominate the final residual. Moreover, when the problem is reasonably well-conditioned, even for challenging permeability fields, our method can achieve the prescribed tolerance within $2$ iterations with only a $2h$ oversampling size and $2$ smoothing steps.
The comparison between Figs.~\ref{frac1} and \ref{frac2} further supports the conclusion from Section~\ref{subsec:SPE10} that increasing the number of subdomains reduces the number of iterations required to reach the prescribed tolerance.

We remark that, although the proposed method reduces the number of GMRES iterations, its computational cost depends on the number and size of local subdomain solves. Although MRCM-based procedures are fast and scalable, the present study focuses on iteration counts and residual reduction; a detailed wall-clock comparison, including setup and preconditioner application costs, is deferred to future work.

\section{Conclusions}

In this work, we propose a matrix-free right-preconditioning strategy for GMRES based on the MRCM-OS multiscale operator. The improved accuracy of MRCM-OS relative to the original MRCM translates into significantly faster convergence of the preconditioned system.

Our results demonstrate that the proposed approach substantially reduces iteration counts across a range of challenging, high-contrast subsurface flow problems. In Appendix~A, we outline a possible analytical framework for studying the method.

Future work will focus on establishing rigorous bounds for the preconditioner and assessing its computational efficiency in large-scale simulations. We also plan to extend the present formulation to a purely algebraic preconditioner, as well as to multiphase subsurface flow problems.

\appendix
\section*{Appendix A. A Convergence Framework for Right-Preconditioned GMRES}

We present a theoretical framework that explains the rapid convergence observed
in the numerical experiments. To analyze the convergence of GMRES applied to the
right-preconditioned system, we follow the classical Krylov subspace framework
of ~\cite{saad19861234}, together with field-of-values-based
convergence theory (see, e.g., \cite{FGMRES, greenbaum1997iterative, elman2014finite, simoncini2007recent}).

Let
\[
A p = b
\]
denote the fine-scale linear system, where $A \in \mathbb{R}^{n \times n}$ is
symmetric positive definite (SPD). Let $T : \mathbb{R}^n \to \mathbb{R}^n$
denote the linear operator corresponding to one application of the MRCM-OS
preconditioner.

Since GMRES is applied to the right-preconditioned system, we consider
\[
A T y = b, \qquad p = T y.
\]

We equip $\mathbb{R}^n$ with the energy inner product
\[
(x,z)_A := x^T A z, \qquad \|x\|_A := (x,x)_A^{1/2}.
\]

\subsection*{A.1 Classical spectral-equivalence context and approximate inverse behavior}

Classical domain decomposition theory establishes spectral equivalence
between the preconditioned operator and the identity in an appropriate energy
inner product. Specifically, one typically obtains bounds of the form
\[
c \, (x,x)_A \le (ATx, x)_A \le C \, (x,x)_A,
\qquad \forall x \in \mathbb{R}^n,
\]
for constants $0 < c \le C < \infty$.
Such estimates imply robustness of the preconditioned operator and lead to
condition number bounds.

However, these classical results do not by themselves explain the extremely
rapid convergence observed in Section~4. To capture this behavior, we introduce
a stronger assumption.

\medskip

\noindent
\textbf{Assumption A.1 (Approximate inverse property).}
There exists $\varepsilon \in (0,1)$ such that
\[
\|I - AT\|_A \le \varepsilon.
\]

By definition of the induced operator norm, this is equivalent to
\[
\|(I-AT)x\|_A \le \varepsilon \|x\|_A,
\qquad \forall x \in \mathbb{R}^n,
\]
i.e., the error operator $E := I - AT$ is a contraction in the energy norm.

This condition states that $T$ approximates $A^{-1}$ in operator norm and is
significantly stronger than classical spectral equivalence estimates.

\subsection*{A.2 Transformation to a symmetric setting}

Define the similarity transformation
\[
\widetilde B := A^{1/2} (AT) A^{-1/2}.
\]

\begin{lemma}
The matrices $\widetilde B$ and $AT$ are similar and therefore share the same
spectrum. Moreover,
\[
\|I - \widetilde B\|_2 = \|I - AT\|_A.
\]
\end{lemma}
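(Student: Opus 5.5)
The argument is a direct computation; the only structural input is that $A$ is SPD, so $A^{1/2}$ exists and is SPD and invertible. Similarity is immediate from the definition $\widetilde B = A^{1/2}(AT)A^{-1/2} = S (AT) S^{-1}$ with $S = A^{1/2}$. Hence $\widetilde B$ and $AT$ have the same characteristic polynomial, and therefore the same eigenvalues with multiplicities. Note that $AT$ need not be symmetric, but similarity does not require this.

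For the norm identity, I first observe that $I - \widetilde B = A^{1/2}(I - AT)A^{-1/2}$, because $A^{1/2} I A^{-1/2} = I$. Next I relate the energy norm to the Euclidean norm. For every $x$,
\[
\|x\|_A^2 = x^T A x = \|A^{1/2}x\|_2^2,
\]
so $\|x\|_A = \|A^{1/2}x\|_2$, using the symmetry of $A^{1/2}$.

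Then, for any matrix $M$, I compute the induced norm:
\[
\|M\|_A = \sup_{x\neq 0}\frac{\|A^{1/2}Mx\|_2}{\|A^{1/2}x\|_2}.
\]
I substitute $y = A^{1/2}x$. This is a bijection of $\mathbb{R}^n\setminus\{0\}$ onto itself, since $A^{1/2}$ is invertible. The quotient becomes $\|A^{1/2}MA^{-1/2}y\|_2/\|y\|_2$, and so
\[
\|M\|_A = \|A^{1/2}MA^{-1/2}\|_2.
\]
Applying this with $M = I - AT$ and using the first observation gives $\|I-\widetilde B\|_2 = \|I-AT\|_A$.

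There is no real obstacle here. The only point needing care is that the supremum is preserved under the change of variables, which holds because $A^{1/2}$ is invertible.
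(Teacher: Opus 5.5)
Your proof is correct and takes the same approach as the paper: the paper also reads similarity off the definition of $\widetilde B$ and then applies the identity $\|M\|_A = \|A^{1/2} M A^{-1/2}\|_2$ with $M = I - AT$. You additionally fill in the steps the paper leaves implicit, namely $\|x\|_A = \|A^{1/2}x\|_2$, the change of variables $y = A^{1/2}x$, and the observation $I - \widetilde B = A^{1/2}(I-AT)A^{-1/2}$.
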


\begin{proof}
For any matrix $M$,
\[
\|M\|_A
=
\sup_{x \neq 0} \frac{\|Mx\|_A}{\|x\|_A}
=
\|A^{1/2} M A^{-1/2}\|_2.
\]
Applying this identity to $M = I - AT$ yields the result.
\end{proof}

\subsection*{A.3 Field-of-values localization}

Let the field of values (numerical range) of $\widetilde B$ be defined by
\[
W(\widetilde B) = \{ x^* \widetilde B x : \|x\|_2 = 1 \}.
\]

\begin{lemma}
Under Assumption A.1, the field of values satisfies
\[
W(\widetilde B) \subset \{ z \in \mathbb{C} : |z - 1| \le \varepsilon \}.
\]
In particular,
\[
\operatorname{Re}(z) \ge 1 - \varepsilon > 0.
\]
\end{lemma}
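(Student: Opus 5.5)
The plan is to reduce the statement to a Cauchy--Schwarz estimate on the shifted matrix $I-\widetilde B$, using the previous lemma to convert Assumption A.1 into a Euclidean-norm bound. Take any $z \in W(\widetilde B)$. Then $z = x^*\widetilde B x$ for some $x \in \mathbb{C}^n$ with $\|x\|_2 = 1$. Since $x^* x = 1$,
\[
1 - z = x^* x - x^* \widetilde B x = x^*(I-\widetilde B)x .
\]
This identity translates the field of values of $\widetilde B$ by $-1$ into the field of values of $I-\widetilde B$, up to a sign.

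The second step bounds this quantity. By Cauchy--Schwarz and the definition of the induced $2$-norm,
\[
|1-z| \le \|x\|_2\,\|(I-\widetilde B)x\|_2 \le \|I-\widetilde B\|_2 .
\]
The preceding lemma gives $\|I-\widetilde B\|_2 = \|I-AT\|_A$, and Assumption A.1 bounds the right-hand side by $\varepsilon$. Hence $|z-1|\le\varepsilon$, which is the disk inclusion.

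The real-part bound then follows directly. We have $\operatorname{Re}(z) = 1 - \operatorname{Re}(1-z) \ge 1 - |1-z| \ge 1-\varepsilon$, and this is positive because $\varepsilon<1$.

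The only point needing care is that $W(\widetilde B)$ is defined over complex unit vectors while $A$ and $T$ are real. The spectral norm of a real matrix is the same whether taken over $\mathbb{R}^n$ or $\mathbb{C}^n$. To see this, write $x = a + ib$; then
\[
\|Mx\|_2^2 = \|Ma\|_2^2 + \|Mb\|_2^2 \le \|M\|_2^2\bigl(\|a\|_2^2+\|b\|_2^2\bigr) = \|M\|_2^2\,\|x\|_2^2 .
\]
So the real operator norm bound in A.1, transported by the lemma, remains valid for complex $x$. Everything else is routine.
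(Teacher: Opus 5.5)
Your proposal is correct and follows the same route as the paper, which bounds $|x^*(\widetilde B - I)x|$ by $\|\widetilde B - I\|_2 \le \varepsilon$ using Lemma~1 and Assumption~A.1. Your explicit derivation of the real-part bound, and your check that the spectral norm of a real matrix is the same over $\mathbb{C}^n$ as over $\mathbb{R}^n$, fill in details the paper leaves implicit.
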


\begin{proof}
For any $\|x\|_2 = 1$,
\[
|x^*(\widetilde B - I)x|
\le
\|\widetilde B - I\|_2
\le
\varepsilon.
\]
\end{proof}

\subsection*{A.4 GMRES convergence}

\begin{theorem}[GMRES convergence under the approximate inverse property]
Assume that $A$ is SPD and that Assumption~A.1 holds. Define
\[
\widetilde B := A^{1/2}(AT)A^{-1/2}, \qquad
\widetilde y := A^{1/2}y, \qquad
\widetilde b := A^{1/2}b.
\]

Then the GMRES residuals for the transformed system
\[
\widetilde B \widetilde y = \widetilde b
\]
satisfy
\[
\|\widetilde r_m\|_2 \le \varepsilon^m \|\widetilde r_0\|_2,
\]
where $\widetilde r_m := \widetilde b - \widetilde B \widetilde y_m$.

Consequently, to reduce the residual by a factor $\tau \in (0,1)$, it suffices
to take
\[
m \ge \frac{\log(\tau)}{\log(\varepsilon)}.
\]
\end{theorem}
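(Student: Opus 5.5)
The bound follows from the minimal-residual property of GMRES with one admissible polynomial. At step $m$, GMRES on $\widetilde B\widetilde y=\widetilde b$ minimizes $\|\widetilde r_m\|_2$ over residuals of the form $\widetilde r_m = q(\widetilde B)\widetilde r_0$, with $q$ a polynomial of degree at most $m$ and $q(0)=1$. I take the test polynomial $q(z)=(1-z)^m$, which satisfies $q(0)=1$. Optimality then gives
\[
\|\widetilde r_m\|_2 \le \|(I-\widetilde B)^m \widetilde r_0\|_2 \le \|I-\widetilde B\|_2^m \,\|\widetilde r_0\|_2 ,
\]
using submultiplicativity of the induced $2$-norm.

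The first lemma of Section A.2 states $\|I-\widetilde B\|_2=\|I-AT\|_A$, and Assumption~A.1 bounds this by $\varepsilon$. Hence $\|\widetilde r_m\|_2\le\varepsilon^m\|\widetilde r_0\|_2$.

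For the iteration count, I require $\varepsilon^m\le\tau$. Since $\log\varepsilon<0$, this is equivalent to $m\ge\log\tau/\log\varepsilon$, where dividing by the negative number $\log\varepsilon$ reverses the inequality.

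The field-of-values lemma is not needed for this direct argument. It would only enter an Elman/Crouzeix-type bound, which is weaker here. I will add one remark: the transformed problem is what GMRES minimizes in the $A^{-1}$-weighted sense. Because the claim is stated for the transformed system, this causes no difficulty. The main point to verify carefully is that the polynomial characterization of the transformed residuals holds, i.e. that $\widetilde r_0=\widetilde b-\widetilde B\widetilde y_0$ and the iterates lie in $\widetilde y_0+\mathcal K_m(\widetilde B,\widetilde r_0)$. This is standard and holds by definition of GMRES applied to $\widetilde B$.
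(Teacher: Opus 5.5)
Your argument is correct and is essentially the paper's own proof: GMRES residual optimality with the test polynomial $(1-z)^m$, submultiplicativity of the induced $2$-norm, and Lemma~1 together with Assumption~A.1 to obtain $\|I-\widetilde B\|_2\le\varepsilon$, followed by dividing by $\log\varepsilon<0$ to get the iteration count. One minor slip in your side remark, which does not affect the proof: since $\widetilde r = A^{1/2}(b-ATy)$, GMRES on the transformed system minimizes $\|b-ATy\|_A$, the $A$-weighted norm of the original residual, not an $A^{-1}$-weighted one.
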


\begin{proof}
By Lemma~1 and Assumption~A.1,
\[
\|I - \widetilde B\|_2 \le \varepsilon.
\]

GMRES satisfies the residual minimization property
\[
\|\widetilde r_m\|_2
\le
\min_{\substack{p \in \Pi_m \\ p(0)=1}}
\|p(\widetilde B)\|_2 \, \|\widetilde r_0\|_2,
\]
where $\Pi_m$ denotes the set of polynomials of degree at most $m$.

Choosing $p(z) = (1 - z)^m$, we obtain
\[
\|\widetilde r_m\|_2
\le
\|(I - \widetilde B)^m\|_2 \, \|\widetilde r_0\|_2
\le
\|I - \widetilde B\|_2^m \|\widetilde r_0\|_2
\le
\varepsilon^m \|\widetilde r_0\|_2.
\]
\end{proof}

\noindent
\textbf{Remark.} 
The estimate above is stated for the transformed system associated with
$\widetilde B = A^{1/2}(AT)A^{-1/2}$. It provides an abstract convergence
mechanism for the right-preconditioned operator. The numerical experiments
in Section~4 report the algebraic residual of the original system.

\subsection*{A.5 Interpretation for MRCM-OS}

The numerical results in Section~4 show that, for sufficiently enriched
MRCM-OS configurations, GMRES often converges in one or two iterations.
This behavior is consistent with the approximate inverse property in
Assumption~A.1.

For the MRCM-OS operator, the error $I - AT$ can be interpreted as the sum of:
\begin{itemize}
\item coarse-space approximation error,
\item oversampling localization error,
\item smoothing error.
\end{itemize}

Oversampling reduces interface truncation effects, while smoothing attenuates
high-frequency components. Together, these mechanisms suggest that $T$
behaves as a high-quality approximation to $A^{-1}$.

\medskip

\noindent
\textbf{Remark.}
Establishing a bound of the form $\|I - AT\|_A \le \varepsilon$ requires
approximation properties beyond classical condition number estimates and
remains an open problem for MRCM-OS.

\medskip

This framework explains the observed rapid convergence while clearly separating
what is rigorously justified from what is supported by numerical evidence.

\bigskip
\noindent {\bf Acknowledgments} 

This work is partially supported by the National Science Foundation (USA) grant 2401945, and by S\~ao Paulo research Foundation - FAPESP (Brazil) grants 2025/09743-3 and 2013/07375-0. Any opinions, findings, and conclusions or recommendations expressed in this material are those of the authors and do not necessarily reflect the views of the National Science Foundation or FAPESP. F.S.S. is partially funded by National Council for Scientific and Technological Development - CNPq (Brazil) grant 312372/2023-0.

The authors acknowledge the National Laboratory for Scientific Computing (LNCC/MCTI, Brazil) for providing HPC resources of the S. Dumont supercomputer, which have contributed to the research results reported within this paper (URL: http://sdumont.lncc.br). Additionally, the computing resources of the Cyber-Infrastructure Research Services at the University of Texas at Dallas Office of Information Technology were utilized. The authors would like to express their gratitude to Dr. M\'arcio Borges for his assistance in accessing the LNCC cluster. 

\bigskip
\noindent {\bf Declaration of generative AI and AI-assisted technologies in the writing process}

During the preparation of this work the authors used chatGPT in order to improve language and readability. 
After using this tool, the authors reviewed and edited the content as needed and take full responsibility for the content of the publication.

\bibliography{dilongref1,dilongref2,dilongref3,dilongref4,dilongref5}
\bibliographystyle{unsrt}

\end{document}